\newcommand{\Z}{\mathbb{Z}}
\newcommand{\R}{\mathbb{R}}
\newcommand{\U}{\mathcal{U}}
\newcommand{\E}{\mathcal{E}}
\newcommand{\cH}{\mathcal{H}}
\newtheorem{thm}{Theorem}
\newtheorem{prop}{Proposition}
\newtheorem{observation}{Observation}
\newtheorem{lemma}[thm]{Lemma}
\newtheorem{claim}[thm]{Claim}
\theoremstyle{definition}
\title{Pancyclicity in hypergraphs with large uniformity}
\author{
{{Teegan Bailey}}\thanks{
\footnotesize {Email: {\tt teeganb@email.sc.edu}.
}}
\and{{Isaiah Hollars}}\thanks{
\footnotesize {Email: {\tt isaiah.hollars@sc.edu}.
}}
\and{{Yupei Li}}\thanks{
\footnotesize {E-mail: {\tt yupei@email.sc.edu}.
}}
\and{{Ruth Luo}}\thanks{
\footnotesize {Email: {\tt ruthluo@sc.edu}.
}}}
\date{\today}
\begin{document}
\vspace{-0.3in}

\maketitle

\begin{abstract}
A Berge cycle of length $\ell$ in a hypergraph $\mathcal{H}$ is a sequence of alternating vertices and edges $v_0e_0v_1e_1...v_\ell e_\ell v_0$ such that $\{v_i,v_{i+1}\}\subseteq e_i$ for all $i$, with indices taken modulo $\ell$. 


For $n$ sufficiently large and $r\geq \lfloor\frac{n-1}{2}\rfloor-1$ we prove exact  minimum degree conditions for an $n$-vertex, $r$-uniform hypergraph to contain Berge cycles of every length between $2$ and $n$. 
In conjunction with previous work, this provides sharp Dirac-type conditions for pancyclicity in  $r$-uniform hypergraphs for all $3\leq r\leq n$ when $n$ is sufficiently large.

\medskip\noindent
{\bf{Mathematics Subject Classification:}} 05C65, 05C35, 05C38.\\
{\bf{Keywords:}} extremal hypergraph theory, cycles and paths, pancyclic.
\end{abstract}

\section{Introduction}
    A {\em hypergraph} $\cH$ is a set of vertices $V(\cH)$ and a set of edges $E(\cH)$ such that every edge is a subset of the vertices. We say a hypergraph is {\em $r$-uniform} if every edge contains exactly $r$ vertices. In particular, a $2$-uniform hypergraph is simply a graph.
    The {\em degree} of a vertex $v$ in a hypergraph $\cH$, denoted $d_{\cH}(v)$, is the number of edges containing $v$. We denote by $\delta(\cH)$ the {\em minimum degree} over all vertices in $\cH$. Given a pair of vertices $u$ and $v$, the \textit{co-degree} of the pair, denoted $d_{\cH}(u,v)$, is the number of edges containing both $u$ and $v$. When $\cH$ is clear from context, we may drop the subscripts in the definitions above. 
    
    A {\em hamiltonian cycle (path)} in a graph is a cycle (path) which covers all of its vertices. We say a graph is {\em hamiltonian} if it contains such a cycle. The first extremal result for hamiltonian cycles was due to Dirac.
    
    \begin{thm}[Dirac~\cite{Dirac}]
    Let $n \geq 3$. If $G$ is an $n$-vertex graph with minimum degree $\delta(G) \geq n/2$, then $G$ is hamiltonian.
    \end{thm}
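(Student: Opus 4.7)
The plan is to use the classical rotation argument. First, I would take a longest path $P = v_1 v_2 \cdots v_k$ in $G$. Since $P$ cannot be extended at either end, every neighbor of $v_1$ and every neighbor of $v_k$ must lie on $P$; the minimum degree condition $\delta(G) \geq n/2$ therefore forces $k \geq n/2 + 1$.

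Next, I would convert $P$ into a cycle spanning $V(P)$. Set
$A = \{i : 2 \leq i \leq k,\ v_1 v_i \in E(G)\}$ and $B = \{i : 2 \leq i \leq k,\ v_{i-1} v_k \in E(G)\}$. Both sit inside $\{2, \ldots, k\}$, and each has size at least $n/2$, so $|A| + |B| \geq n > k - 1 \geq |A \cup B|$, which forces $A \cap B \neq \emptyset$. Any index $i$ in the intersection yields both edges $v_1 v_i$ and $v_{i-1} v_k$, so $C := v_1 v_2 \cdots v_{i-1} v_k v_{k-1} \cdots v_i v_1$ is a cycle using all of $V(P)$.

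Finally, I would argue $V(C) = V(G)$. The hypothesis $\delta(G) \geq n/2$ makes $G$ connected, since any component would need at least $n/2 + 1$ vertices in order to provide each of its vertices with the required degree, and two such components cannot fit inside $n$ vertices. So if some $u \notin V(C)$ existed, connectivity would produce an edge from $V(G) \setminus V(C)$ into $V(C)$; cutting $C$ at the $V(C)$-endpoint of that edge would yield a path on $k+1$ vertices, contradicting the maximality of $P$. Hence $C$ is hamiltonian.

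The main obstacle is the pigeonhole/rotation step in the middle paragraph: the insight that the neighbors of $v_1$ together with the ``predecessors'' of the neighbors of $v_k$ overpopulate the positions along $P$, forcing a crossing that can be flipped into a cycle through $V(P)$. Once this is established, extending the cycle to all of $V(G)$ is routine, as the degree bound immediately rules out disconnection.
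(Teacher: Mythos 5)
Your argument is correct: the longest-path/rotation proof (all neighbors of the endpoints lie on $P$, the sets $A$ and $B$ overpopulate $\{2,\ldots,k\}$ and must intersect, and connectivity from $\delta(G)\geq n/2$ promotes the resulting cycle on $V(P)$ to a hamiltonian cycle). The paper only quotes Dirac's theorem from the literature and gives no proof, so there is nothing to compare against; your write-up is the standard classical proof and is complete.
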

    
    Bondy~\cite{Bondy} studied a much stronger property in graphs called {\em pancyclicity}. An $n$-vertex graph is {\em pancyclic} if it contains cycles of every length from $3$ to $n$. The following is one of many results by Bondy on pancyclicity. In particular, it strengthens Dirac's Theorem.
    
    \begin{thm}[Bondy~\cite{Bondy}]\label{Bondy}
    Let $G$ be an $n$-vertex hamiltonian graph with $|E(G)| \geq \frac {n^2}4$. Then either $G$ is pancyclic or $K_{\frac n2, \frac n2}$.
    \end{thm}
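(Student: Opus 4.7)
The plan is to fix a Hamilton cycle $C=v_1v_2\cdots v_nv_1$ in $G$ and reason about the chords of $C$ (the edges of $G$ not on $C$). The central observation is that each chord $v_iv_j$ produces two cycles by combining with the two arcs of $C$ between its endpoints: if the shorter arc contains $\ell$ edges, the resulting cycles have lengths $\ell+1$ and $n-\ell+1$. So if $G$ misses a cycle of length $k$ for some $k\in\{3,\ldots,n-1\}$, then $G$ can contain no chord of length $k-1$ or of length $n-k+1$. In addition, pairs of crossing chords $v_av_b$, $v_cv_d$ with $a<c<b<d$ generate further cycles (of lengths governed by $b-c$, $d-a$, etc.), which places extra constraints on $E(G)$ whenever a particular cycle length is to be avoided.

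Assuming $G$ is not pancyclic, I would let $k$ be a missing cycle length in $\{3,\ldots,n-1\}$ and collect the forbidden single-chord and two-chord configurations. A careful double-count over vertex neighborhoods (or equivalently over chord positions around $C$) should then yield $|E(G)|\leq n^2/4$. The equality case would pin down the extremal pattern of chords exactly; matching it against the chord structure of $K_{n/2,n/2}$ with respect to its canonical Hamilton cycle would force $n$ to be even and $G\cong K_{n/2,n/2}$. A consistency check is that $K_{n/2,n/2}$ is bipartite, hence contains no odd cycles (in particular no triangle) and is therefore genuinely not pancyclic, confirming it as the unique extremal non-pancyclic example.

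The main obstacle, I expect, is the case analysis of multi-chord cycle constructions. Single-chord forbiddances alone do not rule out enough edges to drive the bound down to $n^2/4$: when the missing length $k$ lies near $n/2$, only one or two chord lengths are directly excluded, and the bulk of the savings must come from pairs (or triples) of chords that together would create a $C_k$. Organizing this combinatorial bookkeeping tightly enough to both reach $|E(G)|\leq n^2/4$ and isolate $K_{n/2,n/2}$ as the unique case of equality is where the delicate work lies.
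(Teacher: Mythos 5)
This statement is Bondy's 1971 theorem, which the paper cites as a known tool and does not prove, so there is no in-paper argument to compare against; your proposal must stand on its own.

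As written, it does not: it is a plan whose central step is missing. The observation that a chord whose shorter arc has $\ell$ edges yields cycles of lengths $\ell+1$ and $n-\ell+1$ is correct, and you correctly note that forbidding a single cycle length $k$ only excludes chords of two specific arc-lengths, i.e., only $O(n)$ of the roughly $n^2/4$ edges you need to account for. Everything after that --- ``a careful double-count \ldots should then yield $|E(G)|\le n^2/4$'' and ``organizing this combinatorial bookkeeping \ldots is where the delicate work lies'' --- is precisely the theorem, deferred rather than proved. The multi-chord configurations that force the bound, and the rigidity argument identifying $K_{n/2,n/2}$ as the unique extremal graph, are the entire content of Bondy's result; without them you have only the easy direction. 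I would also flag that the per-length direct count you propose is known to be genuinely hard for $k$ near $n/2$ (as you yourself observe), which is why Bondy's actual argument does not proceed this way: he first shows that a hamiltonian graph with at least $n^2/4$ edges other than $K_{n/2,n/2}$ contains an $(n-1)$-cycle, and then handles shorter lengths by induction on $n$, restarting the edge count on the $(n-1)$-vertex cycle. If you want to complete a proof, adopting that reduction (missing length $\Rightarrow$ in particular control the $(n-1)$-cycle case first) is the step your outline lacks; as it stands the proposal identifies the obstacle but does not overcome it.
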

    
    Thus the same minimum degree condition in Dirac's Theorem not only yields a cycle of length $n$ but rather cycles of {\em all} lengths, unless $G$ is the complete bipartite graph which misses all odd cycles.
    
    In~\cite{Bondy}, Bondy proposed a so-called meta-conjecture: {\em ``almost any
    nontrivial condition on a graph which implies that the graph is hamiltonian also implies that the
    graph is pancyclic.” } He later added {\em ``...there may be a simple family of exceptional graphs."}
    
    See for example~\cite{Cream, Draganic, Letzter, Schmeichel} for some results in support of Bondy's meta-conjecture. 
    
    The goal of this paper is to extend pancyclic results to hypergraphs. First we define our notion of cycles in hypergraphs---while many definitions of cycles in hypergraphs exist, e.g., tight cycles, loose cycles, we consider a general definition called {\em Berge cycles}.
    
    A {\em Berge cycle} of length $k$ in a hypergraph is a set of $k$ distinct vertices $\{v_0, v_1, \ldots, v_{k-1}\}$ and $k$ distinct edges $\{e_0, \ldots, e_{k-1}\}$ such that for all $0\leq i \leq k-1$, $\{v_i, v_{i+1}\} \subseteq e_i$. For convenience, we write Berge cycles as an alternating sequence of vertices and edges $v_0e_0v_1\ldots v_{n-1}e_{n-1}v_0$. We often say {\em $k$-Berge cycle} to refer to a Berge cycle of length $k$.

    A {\em hamiltonian Berge cycle} in an $n$-vertex hypergraph is a Berge cycle of length $n$. We call a hypergraph $\cH$ {\em hamiltonian} if it contains a hamiltonian Berge cycle. Moreover, we say $\cH$ is {\em pancyclic} if it contains Berge cycles of every length from $2$ to $n$. Note that unlike in the graph case, $r$-uniform hypergraphs may have $2$-Berge cycles. 
    
    Sufficient conditions for long Berge cycles and Berge cycles of fixed length have been well studied in a number of papers. See for instance~\cite{GL, FKL1, FKL2, KL, Gyori, salia, MHG, CP}. In particular, the following theorem gives a Dirac-type condition for the existence of hamiltonian Berge cycles in uniform hypergraphs. Note that the minimum degree required depends on the uniformity $r$, and there is a transition around $r = n/2$.  
        \begin{thm}[Kostochka, Luo, McCourt~\cite{KLM}]\label{KLMthm}
            Let $3 \leq  r <n$, and let $\cH$ be an $n$-vertex $r$-uniform hypergraph. If $r\leq \left\lfloor (n-1)/2 \right\rfloor$ and $\delta(\cH) \geq  \binom{\left\lfloor (n-1)/2 \right\rfloor}{r-1} +1$ or  $r\geq n/2$ and $\delta(\cH) \geq r$, then $\cH$ contains a Berge hamiltonian cycle. 
        \end{thm}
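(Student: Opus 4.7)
The plan is to split the argument by the two regimes of $r$, and in both regimes to route through the same two-step reduction: first find a hamiltonian cycle $C = v_0 v_1 \cdots v_{n-1} v_0$ in the $2$-shadow graph $\partial_2(\cH)$ (the graph on $V(\cH)$ whose edges are exactly the pairs contained in some hyperedge of $\cH$), then lift $C$ to a Berge hamiltonian cycle by selecting distinct hyperedges $e_i \supseteq \{v_i, v_{i+1}\}$ via a system-of-distinct-representatives argument on the bipartite graph between consecutive pairs of $C$ and hyperedges of $\cH$.

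For $r \leq \lfloor (n-1)/2 \rfloor$: if some vertex $v$ had fewer than $\lfloor (n-1)/2 \rfloor + 1$ neighbors in $\partial_2(\cH)$, every hyperedge through $v$ would lie in $\{v\}\cup N(v)$, forcing $d_\cH(v) \leq \binom{\lfloor (n-1)/2 \rfloor}{r-1}$ and contradicting the hypothesis. Hence $\delta(\partial_2(\cH)) \geq \lfloor(n-1)/2\rfloor + 1 \geq n/2$, and Dirac's theorem produces $C$. The minimum-degree assumption is tailored precisely so that Hall's condition holds for the SDR step: a subset $S$ of $k$ consecutive pairs of $C$ fails to be covered by a hyperedge $e$ only if $e$ misses at least one vertex from every pair in $S$, a structural constraint that the degree bound $\binom{\lfloor (n-1)/2 \rfloor}{r-1}+1$ rules out in aggregate once one counts, for each pair, how many hyperedges could be ``missed'' by it.

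For $r \geq n/2$: the same shadow argument still applies---if $v$ has $s$ neighbors in $\partial_2(\cH)$ then all hyperedges through $v$ lie in a set of size $s+1$, so $\binom{s}{r-1} \geq d_\cH(v) \geq r$ forces $s \geq r \geq n/2$, and Dirac again produces $C$. The SDR step is now much more delicate, because $\delta(\cH) \geq r$ only yields $|E(\cH)| \geq n$ via the handshake identity, so the lift must be essentially a bijection and Hall's condition is close to failing. I would handle it by choosing $C$ among all hamiltonian cycles of $\partial_2(\cH)$ to minimize the number of forced hyperedge reuses in any SDR, then use an exchange argument along chords of $C$ to resolve any surviving Hall violation---swapping a chord for a cycle edge shifts the supply-and-demand of hyperedges in a controlled way.

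The main obstacle in both regimes is the distinctness of the witnessing hyperedges: finding a cycle in the shadow is the easy part, and the essential content of the degree condition is that it simultaneously forces $n$ pairwise distinct witnesses. I expect the trickiest step to be the Hall verification in the large-$r$ regime, where tight extremal examples with $|E(\cH)|$ as small as $n$ sit exactly on the boundary; the argument there likely requires a careful analysis of near-equality cases together with the chord-exchange choice of $C$, rather than a purely counting-based application of Hall's theorem.
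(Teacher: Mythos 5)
This statement is Theorem~\ref{KLMthm}, which the paper cites from Kostochka--Luo--McCourt~\cite{KLM} and does not prove, so there is no internal proof to compare against; I can only assess your proposal on its own terms. The shadow-degree computations in both regimes are correct: the hypothesis does force $\delta(\partial_2(\cH))\geq n/2$, and Dirac's theorem gives a hamiltonian cycle $C$ in the $2$-shadow. But the entire difficulty of the theorem is the lifting step, and in both regimes you state an intention rather than an argument. For a fixed shadow cycle $C$, Hall's condition for selecting distinct witnessing hyperedges can genuinely fail: a pair $\{v_i,v_{i+1}\}$ on $C$ is only guaranteed co-degree $1$, and a single hyperedge can be the unique witness for several consecutive pairs (e.g.\ $e\supseteq\{v_i,v_{i+1},v_{i+2}\}$ with both pairs having co-degree $1$ kills Hall for $S=\{p_i,p_{i+1}\}$). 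The vertex-degree bound $\binom{\lfloor(n-1)/2\rfloor}{r-1}+1$ does not ``in aggregate'' rule this out for an arbitrary $C$; one must either choose $C$ with additional properties or perform rotation/exchange arguments, and that is where the real work lives. Your sketch of ``counting, for each pair, how many hyperedges could be missed'' is not a proof and, as stated, I do not see how to complete it.

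The gap is even more severe in the regime $r\geq n/2$. There $\delta(\cH)\geq r$ only gives $|E(\cH)|\geq n$, so when equality holds the lift must be a perfect matching between the $n$ consecutive pairs of $C$ and the $n$ hyperedges; whether some hamiltonian ordering of $V(\cH)$ admits such a system of distinct representatives is essentially the full content of the theorem, so ``choose $C$ to minimize forced reuses and then exchange along chords'' is close to restating the goal rather than proving it. Construction~3 of the paper (an $r$-regular hypergraph minus one edge, hence $n-1$ edges) shows how tight this is: the shadow can still be hamiltonian while no lift exists, so the argument must use the existence of the $n$th edge in an essential, non-counting way. To make your plan rigorous you would need to supply the exchange lemma explicitly and verify it terminates; as written, both Hall verifications are missing ideas, not omitted routine details.
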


        In~\cite{BLL}, three of the present authors proved that for $r \leq \lfloor (n-1)/2 \rfloor -2$, the minimum degree condition in Theorem~\ref{KLMthm} in fact guarantees the stronger condition of pancyclicity when $n$ is sufficiently large. 
    
        \begin{thm}[Bailey, Li, Luo~\cite{BLL}]
            Let $n$ and $r$ be positive integers with $3 \leq  r \leq  \lfloor (n-1)/2 \rfloor - 2$, such that $n\geq 70$ if $r \in \{3,4\}$ and $n \geq 39$ if $r \geq 5$. If $\cH$ is an $n$-vertex, $r$-uniform hypergraph with minimum degree $\delta(\cH) \geq  \binom{\lfloor (n-1)/2 \rfloor}{r-1} + 1$ then $\cH$ is pancyclic.       
        \end{thm}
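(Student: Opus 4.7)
The plan is to first invoke Theorem~\ref{KLMthm}: since $r\leq\lfloor(n-1)/2\rfloor$ and the minimum-degree hypothesis matches that theorem's condition, $\cH$ contains a Berge hamiltonian cycle $C=v_0e_0v_1\cdots v_{n-1}e_{n-1}v_0$. This immediately yields a Berge cycle of length $n$, and the problem reduces to realising every shorter length $\ell\in\{2,\ldots,n-1\}$ as the length of some Berge cycle. A useful first observation is that the minimum-degree bound forces the codegree-neighbourhood of every vertex $v$ (the set of vertices sharing an edge with $v$) to have size at least $\lfloor(n-1)/2\rfloor+1$, since otherwise $d_\cH(v)\leq\binom{\lfloor(n-1)/2\rfloor}{r-1}$ would contradict the hypothesis. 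So $\cH$ behaves like a Dirac-type object at the level of codegree-neighbourhoods.

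For lengths close to $n$ my approach would be \emph{chord reduction} along $C$: an edge $f\notin\{e_0,\ldots,e_{n-1}\}$ containing two cycle vertices $v_i$ and $v_j$ with $i<j$ can be used to replace the arc $v_ie_iv_{i+1}\cdots e_{j-1}v_j$ by the single edge $v_ifv_j$, producing a Berge cycle of length $n-(j-i)+1$. The codegree-neighbourhood bound guarantees many chords of many different spans $j-i$, so iterated chord replacement---where after each step one argues that the shorter cycle still admits some chord of span $2$---should yield all lengths from $n-1$ down to roughly $n/2$. For very short lengths a separate argument suffices: when $\ell=2$, the fact that $\delta(\cH)$ exceeds $\binom{n-2}{r-2}$ forces two edges through some vertex to share a second vertex, giving a Berge $2$-cycle, and for constant $\ell\geq 3$ a greedy construction inside a single codegree-neighbourhood produces a Berge $\ell$-cycle while preserving vertex- and edge-distinctness.

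The main obstacle I anticipate is the \emph{intermediate} length range $\ell\approx n/2$, where neither chord reduction from $n$ nor greedy extension from below reaches cleanly. I would handle this via a stability dichotomy: if $\cH$ is close to the extremal configuration---roughly, two near-cliques of size about $\lfloor n/2\rfloor$ sharing a small interface, where the bound in Theorem~\ref{KLMthm} is almost tight---one analyses that configuration by hand to produce every missing length. Otherwise, $\cH$ is far from extremal, hence contains many chords of widely varying spans, and one combines several chord replacements in a single pass to hit any target length. Controlling the edge-distinctness constraint across all lengths simultaneously is the most delicate aspect, and is where the explicit lower bounds on $n$ in the statement enter.
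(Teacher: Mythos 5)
First, a structural point: this theorem is quoted from the earlier paper~\cite{BLL} and is not proved anywhere in the present manuscript, so there is no internal proof to compare your attempt against; I can only assess the plan on its own terms. Your opening moves are sound and do match the general strategy of this line of work: apply Theorem~\ref{KLMthm} to obtain a hamiltonian Berge cycle $C$, note that the degree hypothesis forces $|\bigcup_{e \ni v} e| \geq \lfloor (n-1)/2\rfloor + 1$ for every vertex $v$, and use extra edges as chords of $C$ (a chord $\{v_i, v_{i+k}\}$ yields both a $(k+1)$- and an $(n-k+1)$-Berge cycle, as in Observation~\ref{chordsgivecycles}).

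However, the plan stops at exactly the point where the proof has to begin. The ``iterated chord replacement'' step asserts, without justification, that after each shortcut the shorter cycle still admits a span-$2$ chord; this is not automatic, because after contracting an arc the surviving cycle has fewer vertices and edges, the minimum-degree hypothesis does not transfer to the sub-hypergraph it spans, and an extra edge that was a span-$2$ chord of $C$ may now have an endpoint off the new cycle or may already have been consumed as a cycle edge (edge-distinctness is a real constraint for Berge cycles). The intermediate range $\ell \approx n/2$ is dispatched by an unspecified ``stability dichotomy'' with no definition of ``close to extremal,'' no argument in the non-extremal case, and no analysis of the extremal case --- yet this is precisely where Constructions 1 and 2 show the bound is tight, so it is the heart of the matter. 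There is also a concrete error in the easy part: your justification for the $2$-cycle is wrong, since $\binom{n-2}{r-2}$ can vastly exceed $\binom{\lfloor (n-1)/2\rfloor}{r-1}+1$ when $r$ is near the top of the allowed range (e.g.\ $n=39$, $r=17$ gives $\binom{19}{16}=969$ versus $\binom{37}{15}$); the correct elementary argument is that if all edges through $v$ met pairwise only in $v$, then $d(v) \leq (n-1)/(r-1)$, far below the degree bound. As written, this is an outline with the two hardest length ranges unaddressed, not a proof.
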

    
    The goal of this paper is to complete the remaining cases, i.e., $r \geq \lfloor (n-1)/2 \rfloor -1$.  The minimum degree bounds are the same as in Theorem~\ref{KLMthm}, thus we can view our theorem as a strenghtening of the hamiltonian result, akin to Bondy's meta-conjecture.    
    \begin{thm}\label{main}
        Let $n$ and $r$ be integers with $r \geq \lfloor (n-1)/2 \rfloor - 1$. Let $\cH$ be an $n$-vertex $r$-uniform hypergraph. If
        \begin{enumerate}[(a)]
            \item $n \geq 31$, $r\leq \lfloor (n-1)/2 \rfloor$ and 
            $\delta(\cH) \geq  \binom{\left\lfloor (n-1)/2 \right\rfloor}{r-1} +1$ or 
            \item $n \geq 55, r\geq n/2$ and $ \delta(\cH) \geq r$,
        \end{enumerate}
        then $\cH$ is pancyclic.
    \end{thm}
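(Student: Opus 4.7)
Both parts of the theorem have hypotheses that exactly match those of Theorem~\ref{KLMthm}, which immediately provides a hamiltonian Berge cycle $C = v_0 e_0 v_1 \ldots v_{n-1} e_{n-1} v_0$. My central tool for producing shorter Berge cycles is a \emph{shortcut} on $C$: given a gap $g \in \{2, 3, \ldots, n-1\}$ and an index $j$, an edge $f \supseteq \{v_j, v_{j+g}\}$ with $f \notin \{e_i : i \notin \{j, j+1, \ldots, j+g-1\}\}$ lets me replace the segment $v_j e_j \ldots e_{j+g-1} v_{j+g}$ by $v_j f v_{j+g}$, producing a Berge cycle of length $n - g + 1$. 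Hence constructing a Berge $k$-cycle for $2 \leq k \leq n-1$ reduces to finding some pair at cyclic distance $n - k + 1$ on a hamiltonian cycle whose co-degree is at least $k$ (since the number of forbidden edges is exactly $k - 1$).

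For small $k$ the forbidden set is tiny, so a shortcut is easy; the extreme case $k = 2$ further reduces to showing $\cH$ is not linear, which follows by comparing the Fisher-type bound $|E(\cH)| \leq \binom{n}{2}/\binom{r}{2}$ for linear hypergraphs against the lower bound $|E(\cH)| \geq n \delta / r$ forced by the degree hypothesis. In case (a), where $\delta \geq \binom{\lfloor(n-1)/2\rfloor}{r-1}+1$ is exponentially large in $n$, both $|E(\cH)|$ and typical co-degrees are very large; a direct averaging argument on the $n$ cyclic distance-$g$ pairs of $C$ should yield the required shortcut for every $g$, and I expect the whole range $2 \leq k \leq n-1$ to be handled uniformly without much additional case work.

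The main difficulty is case (b), where $\delta \geq r$ is the weakest possible and $r \geq n/2$: average co-degrees are only of order $r^2/n$, while the needed co-degree $k$ can be as large as $n - 1$. For this regime my plan is to exploit the structural rigidity that any two $r$-sets in $[n]$ intersect in at least $2r - n$ vertices, which forces large pairwise edge intersections once several edges share a vertex. When a single shortcut on a given $C$ fails for a particular $k$, I will either perform local modifications to $C$ (swapping one or two edges to obtain a hamiltonian cycle with more favorable distance-$g$ co-degrees) or build the $k$-cycle directly by splicing together segments of two overlapping hamiltonian cycles. I anticipate the near-hamiltonian lengths $k \in \{n-1, n-2, \ldots\}$ under hypothesis (b) to account for the bulk of the technical work, and the boundary conditions $n \geq 55$ and the transition around $r \approx \lceil n/2 \rceil$ to enter the argument at its tightest steps; this stability-type analysis is where I expect the proof's main obstacle to lie.
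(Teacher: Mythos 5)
Your reduction of a Berge $k$-cycle to a distance-$(n-k+1)$ pair of co-degree at least $k$ is sound (it is essentially the paper's Observation~\ref{chordsgivecycles}), but the plan for producing such pairs rests on a false premise in case (a) and on no argument at all in case (b). In case (a) the degree bound is \emph{not} exponentially large: the overall hypothesis forces $r \in \{\lfloor(n-1)/2\rfloor-1, \lfloor(n-1)/2\rfloor\}$, so $\binom{\lfloor(n-1)/2\rfloor}{r-1}+1$ equals $r+1$ or $\binom{r+1}{2}+1$. For $r=\lfloor(n-1)/2\rfloor$ this gives $|E(\cH)|\geq n\delta/r$, barely more than $n$ edges, and an extra edge $e$ of size $r<n/2$ can satisfy $e\cap e^{+g}=\emptyset$, i.e.\ contain \emph{no} pair at cyclic distance $g$; so averaging over the $n$ distance-$g$ pairs can give zero for a bad $g$, and the range $2\leq k\leq n-1$ is emphatically not handled uniformly. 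This tight regime is exactly where the paper does its real work: the shifting functions $S_s$, the $k$-SSC structure analysis (Proposition~\ref{SSC}), and, for $r=\lfloor(n-1)/2\rfloor-1$, a cycle-compatible auxiliary graph attacked with Brandt's weakly-pancyclic theorem and Bondy's theorem. None of this is replaced by anything in your proposal.

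Your difficulty assessment is also inverted. Case (b) with $r>n/2$ is the easy case: any single extra edge $e$ satisfies $e\cap e^{+k}\neq\emptyset$ by pigeonhole and hence contains a $k$-chord for every $k$, and when there are too few extra edges for this (e.g.\ $|E(\cH)|=n$ is possible under $\delta\geq r$, leaving no extra edges at all --- a situation your shortcut cannot handle), the paper passes to the incidence bipartite graph and invokes the Hu--Sun weakly-bipancyclic theorem (Theorem~\ref{HuSun2}), since $\delta$ of that graph is about $n/2\geq m/3+4$. Your proposed ``local modifications'' and ``splicing of two hamiltonian cycles'' for case (b) are not arguments, and the structural fact $|e\cap f|\geq 2r-n$ is vacuous at the boundary $r=n/2$, which is where the SSC rigidity analysis of the paper's Case~2 is actually needed. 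As written, the proposal proves neither case.
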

    The bounds for $\delta$ and $r$ are sharp due to the following constructions. We note that we do not think the bounds $n \geq 31$ and $n \geq 55$ are best possible, and they can likely be improved with more careful arguments.
    
    {\bf Construction 1}. Let $r \leq \lfloor (n-1)/2\rfloor$. Let $\cH_1$ consist of two cliques on vertex sets $V_1$ and $V_2$ where $|V_1|=|V_2| = (n+1)/2$ and $|V_1 \cap V_2| = 1$ if $n$ is odd, and $|V_1| = |V_2| = n/2$ and $|V_1 \cap V_2| = 0$ if $n$ is even. In addition, when $n$ is even, $\cH_1$ may have one extra edge joining $V_1$ and $V_2$.
    
    {\bf Construction 2}.  Let $r \leq \lfloor (n-1)/2\rfloor$. Let $V(\cH_2)$ be partitioned into sets $V_1$ and $V_2$ with $|V_1| = \lfloor (n-1)/2 \rfloor, |V_2| = \lceil (n+1)/2 \rceil$. The edge set $E(\cH_2)$ contains all edges with at most one vertex in $V_2$. Moreover, when $n$ is even, we may add one additional edge containing multiple vertices in $V_2$. 
    
    {\bf Construction 3}. Let $r \geq n/2$. Let $\cH_3'$ be any $r$-uniform, $r$-regular, $n$-vertex hypergraph (such hypergraphs exist, for instance the tight $n$-cycle). Let $\cH_3$ be obtained from $\cH_3'$ by removing exactly one edge.
    
    All three constructions do not have hamiltonian Berge cycles---$\cH_1$ has either a vertex or an edge whose removal disconnects $\cH_1$; $\cH_2$ does not have enough edges connecting multiple vertices in $V_2$ to yield a spanning Berge cycle; $\cH_3$ contains only $n-1$ edges. Since these examples are not hamiltonian, they also fail to be pancyclic. Moreover, one can check that the minimum degrees of the graphs are one less than those in Theorem~\ref{main}.
    
\section{Proof outline and tools}

    Let $\cH $ be an $r$-uniform hypergraph with $V(\mathcal{H}) = \{v_0,\dots, v_{n-1}\}$ and $r \geq \lfloor (n-1)/2 \rfloor -1$. Throughout, we will have the necessary degree conditions to apply Theorem \ref{KLMthm}, which guarantees the existence of a Berge hamiltonian cycle. Let $C = v_0e_0v_1 \ldots v_{n-1}e_{n-1}v_0$ be such a Berge cycle where $\{v_i, v_{i+1}\} \subset e_i$ for all $i$.

             {\bf Remark}: Throughout this paper, all arithmetic for vertex indices are taken modulo $n$. We will compare vertex indices in $\Z/n\Z$ with $<$. For $\bar{a},\bar{b}\in \Z$, there exists a unique choice of $a,b\in \{0,\dots,n-1\}$ such that $\bar{a} = a+n\Z$ and $\bar{b} = b+n\Z$. We say $\bar{a}< \bar{b}$ if $a<b$. For readability, we omit the use of bar notation when referring to the indices. 

In any $r$-uniform hamiltonian hypergraph with $r \geq 3$, there is a vertex $v$ with degree at least $3$. Since we have $r > (n-1)/3$, there exists a pair of edges containing $v$ that intersect in another point $u$. This yields a Berge cycle of length $2$. In the rest of the paper, we focus our attention on finding Berge cycles with lengths in $\{3, \ldots, n\}$.
    
    Let $\mathcal{E}= E(\mathcal{H})- E(C)$ denote the set of edges in $\cH$ that are not used in the Berge hamiltonian cycle $C$. We will often refer to these as the {\em extra edges} of $\cH$. For $0\leq i\leq n-1$, let $\mathcal{E}_i = \{e\in \mathcal{E}: v_i\in e\}$. Define $\mathcal{U} = \bigcup_{e\in \mathcal{E}} e$ to be the set of vertices contained in the edges of $\E$ and  
    similarly define $\mathcal{U}_i= \bigcup_{e\in\mathcal{E}_i} e$. 
    
    In some parts of the proofs, we consider an auxiliary graph where cycles in the auxiliary graph correspond to Berge cycles in the hypergraph. We can then make use of the following pancyclic-type results for graphs.
    
    A graph $G$ is called {\em weakly pancyclic} if it contains every cycle length from its girth to its circumference. 
    \footnote{The {\em girth} of a graph $G$, $g(G)$, is the length of its shortest cycle. The {\em circumference} of $G$, $c(G)$, is the length of its longest cycle. In the case where $g(G) = 3$ and $c(G) = n$, being weakly pancyclic is equivalent to being pancyclic.} 
    
    \begin{thm}[Brandt~\cite{Brandt}]\label{Brandt}
        Every nonbipartite graph $G$ of order $n$ with minimum degree at least $\frac{n+2}{3}$ is weakly pancylic, with girth either 3 or 4.
    \end{thm}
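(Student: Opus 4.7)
The plan is to establish two things: first, that the girth $g(G) \leq 4$, and second, that $G$ contains a cycle of every length between $g(G)$ and its circumference $c(G)$.

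For the girth bound I would argue by contradiction. Suppose $g(G) \geq 5$ and fix any vertex $u \in V(G)$. Its $\delta \geq (n+2)/3$ neighbors form an independent set (else a triangle), and no two neighbors of $u$ share a common neighbor other than $u$ (else a $4$-cycle). Hence the ball of radius $2$ around $u$ contains at least $1 + \delta + \delta(\delta-1) = 1 + \delta^2$ distinct vertices, giving $n \geq 1 + \delta^2 \geq 1 + (n+2)^2/9$, i.e.\ $n^2 - 5n + 13 \leq 0$. Since the discriminant $25 - 52$ is negative, this has no real solution, so $g(G) \leq 4$. Because $G$ is nonbipartite, $g(G) \in \{3,4\}$.

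For the weak pancyclicity I would proceed by a cycle-extension argument. The goal is to show that whenever $G$ contains a cycle $C$ of length $k$ with $g(G) \leq k < c(G)$, it also contains a cycle of length $k+1$. Since $k < c(G) \leq n$, there is a vertex $v \notin V(C)$, and the degree condition $\delta \geq (n+2)/3$ forces external vertices to send many edges into $V(C)$. By averaging, one can locate an external vertex $v$ with at least three neighbors on $C$; whenever two of those neighbors are consecutive on $C$, say $v_i$ and $v_{i+1}$, replacing the edge $v_iv_{i+1}$ with the path $v_ivv_{i+1}$ produces a $(k+1)$-cycle. If no such external vertex has two consecutive neighbors on $C$, then every external neighborhood on $C$ is independent as a set of indices, and a careful count (combined with the chord structure inside $V(C)$) either builds a $(k+1)$-cycle directly or yields a contradiction with the minimum degree. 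Iterating upward from $g(G)$ up to $c(G)$ fills in every length.

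The main obstacle, I expect, is the extension step when $k$ is close to $c(G)$: there are then few external vertices, parity constraints tighten, and some of the expected substructures may fail to appear. In particular, when $g(G) = 4$, the first odd cycle lies at some length $\geq 5$, and one must ensure the extension argument propagates \emph{both} parities above the girth. Nonbipartiteness is essential here, since it supplies at least one odd cycle in $[g(G), c(G)]$ from which the extension step can produce odd cycles of every admissible length. A cleaner alternative would be to prove the intermediate ``interval-filling'' statement that cycles of lengths $k$ and $k+2$ in the range force a cycle of length $k+1$; combined with existence of cycles at both endpoints $g(G)$ and $c(G)$, this would reduce the problem to verifying these bookkeeping transitions rather than a full induction.
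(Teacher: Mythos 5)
This statement is Brandt's theorem, which the paper cites from Brandt's thesis and does not prove, so there is no in-paper argument to compare against; I can only assess your proposal on its own terms. Your first half is fine: the Moore-bound count $n\geq 1+\delta+\delta(\delta-1)=1+\delta^2$ under the assumption $g(G)\geq 5$, combined with $\delta\geq (n+2)/3$, gives $n^2-5n+13\leq 0$, which is impossible, so $g(G)\leq 4$. (Note that nonbipartiteness plays no role in this step; it is needed only to guarantee odd cycles later.)

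The second half, however, is not a proof but a restatement of the difficulty. The pivotal claim ``by averaging, one can locate an external vertex $v$ with at least three neighbors on $C$'' is false in general: if $C$ is a short cycle (say $k=4$) in a large graph, the degree condition says nothing about how many neighbors an external vertex has \emph{on} $C$ --- all of its $\geq (n+2)/3$ neighbors may lie outside $V(C)$. Even when such a vertex exists, the case where no external vertex has two consecutive neighbors on $C$ is exactly where the theorem's content lives, and ``a careful count \ldots either builds a $(k+1)$-cycle directly or yields a contradiction'' is an assertion of the conclusion, not an argument. The fallback you suggest is also unsound: cycles of lengths $k$ and $k+2$ do not in general force a cycle of length $k+1$, even in nonbipartite graphs. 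A further structural issue is that bottom-up extension (inserting a vertex into a given $k$-cycle) cannot work as an induction scheme here, because the $(k+1)$-cycle need not contain $V(C)$ at all; the known proofs of such weak-pancyclicity results instead work top-down, starting from a longest cycle and using its chords and the attachments of outside components to produce cycles of all shorter lengths down to the girth. As written, the proposal establishes only the girth claim and leaves the weak pancyclicity unproved.
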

    
    We say a bipartite graph $G$ is {\em bipancyclic} if it contains cycles of every even length from $4$ to $|V(G)|$. Similarly, we say $G$ is \textit{weakly bipancyclic} if it contains cycles of every even length from its girth to its circumference.
    \begin{thm}[Hu, Sun \cite{HuSun}]\label{HuSun1}
        Every hamiltonian bipartite graph on $2n$ vertices with minimum degree at least $\frac{n}{3}+4$ is bipancyclic.
    \end{thm}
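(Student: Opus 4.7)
The plan is to use the hamiltonian cycle $C = v_0v_1\cdots v_{2n-1}v_0$ as a scaffold and exploit non-cycle edges (chords) to produce cycles of every even length from $4$ to $2n$. Let $(A,B)$ be the bipartition of $G$ with $|A|=|B|=n$. Since vertices of $C$ alternate between $A$ and $B$, any chord $v_iv_j$ has $|i-j|$ odd in cyclic distance, and such a chord splits $C$ into two cycles of lengths $|i-j|+1$ and $2n-|i-j|+1$, both even. This already realises many even cycle lengths as complementary pairs summing to $2n+2$, and the task is to close the remaining gaps.

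The central technical step would be a cycle-extension lemma: if $G$ contains a cycle of length $2k$ with $2 \le k < n$, then $G$ contains a cycle of length $2k+2$. Given such a $2k$-cycle $D$, I would pick a vertex $u \notin V(D)$ and use the fact that $u$'s $\delta(G)\geq n/3+4$ neighbors all lie in a single part of size $n$, so many of them lie on $D$. A pigeonhole argument over the cyclic positions of these neighbors on $D$ should locate two neighbors at the correct spacing so that rerouting $D$ through $u$ produces a cycle of length exactly $2k+2$.

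For the base case I would establish a $4$-cycle by a convexity/counting argument: $\sum_{w \in A}\binom{d(w)}{2} \ge n\binom{n/3+4}{2}$ significantly exceeds $\binom{n}{2}$ for large $n$, so some pair in $B$ has at least two common neighbors in $A$. Iterated application of the extension lemma starting from this $4$-cycle then walks through every even length up to the hamiltonian $2n$-cycle.

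The main obstacle will be the extension lemma when $2k$ is close to $2n$: then there are few vertices outside $D$ to serve as $u$, and the pigeonhole margin becomes thin, so one likely must reverse the argument and extend \emph{downward} from the hamiltonian cycle using short chords, matching cycle lengths built bottom-up against those built top-down. A further subtlety is ruling out bipartite structural extremal configurations (such as $K_{n,n}$ minus a near-perfect matching, or blowups of short even cycles) that might omit a particular even length; the additive slack $+4$ in $\delta \geq n/3+4$ is presumably calibrated precisely to exclude these obstructions, so I would expect a short case analysis to finish the argument.
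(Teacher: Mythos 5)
First, note that the paper does not prove this statement: Theorem~\ref{HuSun1} is quoted from Hu and Sun's paper on weakly bipancyclic bipartite graphs, so there is no in-paper proof to compare against; I am judging your sketch on its own.

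Your sketch has a genuine gap at its central step, the ``extension lemma.'' Two problems. First, the mechanics are wrong: in a bipartite graph a cycle of length $2k$ has $k$ vertices in each part, so to reach length $2k+2$ you must insert one new vertex from \emph{each} side (e.g.\ replace an edge $xy$ of $D$ by a path $x\,v\,u\,y$ with $u,v\notin V(D)$ in opposite parts). Rerouting $D$ through a single outside vertex $u$ between two of its neighbours at distance $d$ along $D$ yields a cycle of length $2k-d+2\le 2k$; it can never lengthen the cycle, so the pigeonhole over one vertex's neighbours cannot produce a $(2k+2)$-cycle. Second, and more fundamentally, the degree bound $\delta\ge n/3+4$ is far too weak for any bottom-up extension argument: if $2k$ is small or moderate (say $k<2n/3-4$), a vertex $u\notin V(D)$ has all $\ge n/3+4$ neighbours in a part of size $n$ of which only $k$ lie on $D$, so $u$ may have \emph{no} neighbours on $D$ whatsoever, and the claim ``many of them lie on $D$'' fails outright. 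This is not a boundary nuisance near $2k=2n$ as you suggest, but the generic situation throughout most of the range. The viable engine at this threshold is the one you relegate to a fallback: work top-down from the hamiltonian $2n$-cycle and show that every cycle of length at least $6$ can be shortened by exactly $2$, exploiting the fact that each vertex of a \emph{long} cycle has many neighbours on it and analysing the gaps between consecutive neighbours; this is the structure of Hu--Sun's actual argument (which proves the stronger weakly bipancyclic statement, Theorem~\ref{HuSun2}, from which Theorem~\ref{HuSun1} follows). Your base case (a $4$-cycle via $\sum_{w}\binom{d(w)}{2}>\binom{n}{2}$) and the observation about chords splitting $C$ into two even cycles are both fine, but they do not bridge the gap left by the failed extension step.
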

    In~\cite{HuSun}, a stronger version is proved which will be useful for us.
    \begin{thm}[Hu, Sun \cite{HuSun}]\label{HuSun2}
        If $G=(X\sqcup Y, E)$ is a bipartite graph with minimum degree at least $\frac{m}{3}+4$, where $m = \max\{|X|,|Y|\}$, then $G$ is a weakly bipancyclic graph with girth 4.
    \end{thm}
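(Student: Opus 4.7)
The plan is to prove the two assertions of Theorem~\ref{HuSun2} separately: first that $G$ has girth exactly $4$, and then that $G$ contains an even cycle of every length from $4$ up to its circumference $c(G)$.

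For the girth claim, assume without loss of generality that $|Y|=m$. Observe that $|X|\geq \delta(G)\geq m/3+4$, since every vertex of $Y$ has all of its $\geq \delta$ neighbors inside $X$. I would then double-count cherries (paths of length $2$) centered in $X$: the total number of such paths is $\sum_{x\in X}\binom{d(x)}{2}\geq|X|\binom{\delta}{2}$, which is of order $m^3$, whereas the number of possible endpoint pairs in $Y$ is $\binom{m}{2}=O(m^2)$. For sufficiently large $m$, pigeonhole produces two cherries with the same endpoints, giving a $C_4$. Small $m$ is handled directly: the degree condition $\delta\leq\min\{|X|,|Y|\}$ forces $m\geq 6$, and in these cases $G$ is dense enough (almost complete bipartite) that a $C_4$ appears by inspection. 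Bipartiteness then yields girth exactly $4$.

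For weak bipancyclicity, let $C=v_0v_1\cdots v_{c-1}v_0$ be a longest cycle in $G$. I would argue by downward induction on $\ell$ that every even $\ell$ with $4\leq\ell\leq c$ occurs as a cycle length, which reduces to the key claim: if $G$ contains a cycle of even length $\ell\geq 6$, then $G$ contains a cycle of length $\ell-2$. In a bipartite graph, any chord $v_iv_j$ of an $\ell$-cycle connects vertices in opposite parts, so it jumps an odd distance along the cycle and splits it into two even subcycles of lengths $|i-j|+1$ and $\ell-|i-j|+1$; in particular, a chord of cyclic jump exactly $3$ immediately produces a $C_{\ell-2}$. The minimum degree condition, together with the maximality of $C$ (which forces most neighbors of each cycle vertex to lie on the cycle itself), ensures an abundance of chords. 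A careful pigeonhole over possible cyclic jump lengths then forces either the existence of a jump-$3$ chord, or of two short chords that can be pasted together to reroute the $\ell$-cycle into one of length $\ell-2$.

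The main obstacle is precisely this final chord-counting step: converting an abundance of chords into a chord of the specific jump length $3$ (or a suitable two-chord replacement). The additive constant $+4$ in the hypothesis $\delta\geq m/3+4$ is what provides the slack needed to rule out pathological chord distributions, such as all chords clustering in a small cyclic window or avoiding the jump value $3$. Converting this intuition into a rigorous case analysis on how the neighborhoods of cycle vertices distribute around $C$ is the technical heart of the argument, and is where the precise form of the degree bound is used in an essential way.
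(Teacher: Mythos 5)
This theorem is not proved in the paper at all: it is quoted verbatim from Hu and Sun \cite{HuSun}, where it is the main result of a substantial research article. So the relevant question is whether your sketch is a self-contained proof, and it is not.

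Your girth argument is essentially fine: with $m=|Y|$ one has $|X|\geq\delta\geq m/3+4$, and the cherry count $\sum_{x\in X}\binom{d(x)}{2}\geq\bigl(\tfrac{m}{3}+4\bigr)\binom{\lceil m/3\rceil+4}{2}$ exceeds $\binom{m}{2}$ for every $m\geq 1$ (the cubic lower bound beats the quadratic upper bound already at $m=3$, so no separate small-case analysis is even needed). The genuine gap is the weak bipancyclicity. Your entire argument reduces to the claim that an even cycle of length $\ell\geq 6$ forces one of length $\ell-2$, and you explicitly leave the proof of that claim as ``the technical heart of the argument.'' That is precisely the content of the theorem; asserting that ``a careful pigeonhole over possible cyclic jump lengths'' will produce a jump-$3$ chord or a two-chord rerouting is a statement of hope, not a proof. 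Worse, the one piece of leverage you invoke --- that maximality of $C$ forces most neighbors of a cycle vertex onto the cycle --- is only available for the \emph{longest} cycle, whereas your downward induction must shorten an arbitrary $\ell$-cycle. For $\ell$ small compared to $m$, the vertices of an $\ell$-cycle have almost all of their $\geq m/3+4$ neighbors off the cycle, there need be no chords at all, and a completely different mechanism (using external common neighbors) is required. Without a worked-out case analysis for both regimes, the proposal does not establish the theorem.
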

    
    See~\cite{BFG,BT} for more results on weakly pancyclic graphs and~\cite{SM, Amar,Zhang} for more results on bipancyclic graphs.

    We will prove a stronger version of Theorem~\ref{main} for the cases $r \geq \lceil (n-1)/2 \rceil$. The stronger statements of the following two theorems along with Theorem~\ref{KLMthm} will imply Theorem~\ref{main}. 

    \begin{thm}\label{mainbig} 
        Let $n$ and $r$ be integers with $r \geq \lfloor(n-1)/2 \rfloor$. Let $\cH$ be an $r$-uniform hypergraph with a hamiltonian Berge cycle $C$. 
        Define \[c_r =   \left\{
        \begin{array}{ll}
             6 & \text{if } r = \lfloor (n-1)/2 \rfloor, \\
             1 & \text{if } r \geq n/2. 
        \end{array} 
        \right. \]
        
        If there exists a vertex $v$ contained in at least $c_r$ edges outside of $E(C)$, then $\cH$ is pancyclic.
    
   \end{thm}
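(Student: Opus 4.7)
The plan is to produce Berge cycles of every length in $\{3,\dots,n-1\}$; the length-$2$ case is handled in the proof outline, and $C$ itself is a length-$n$ Berge cycle. Let $v_s$ be the designated vertex and let $f^{(1)},\dots,f^{(c_r)}\in\mathcal{E}_s$ be its extra edges. The principal tool is a \emph{chord reduction}: given an extra edge $f\in\mathcal{E}$ and cycle vertices $v_a,v_b\in f$, closing either arc of $C$ between $v_a$ and $v_b$ with $f$ yields Berge cycles of lengths $|a-b|+1$ and $n-|a-b|+1$. Hence from a single $f$ we obtain Berge cycles of every length in the set $\{d+1,\,n-d+1:d\in D(f)\}$, where $D(f)=\{|i-j|:v_i,v_j\in f,\,i\neq j\}$. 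When several extra edges share $v_s$, one also obtains Berge cycles by concatenating extra edges through $v_s$ and closing along $C$; in general, using $t$ extra edges gives Berge cycles whose length equals (sum of $C$-arc lengths) $+\,t$.

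For the case $r\geq n/2$ and $c_r=1$, the single extra edge $f^{(1)}$ has $|S|=r\geq n/2$, where $S=\{i:v_i\in f^{(1)}\}$. A standard pigeonhole argument on translates shows that $|S|>n/2$ forces $S-S=\mathbb{Z}_n$, hence $D(f^{(1)})\cup(n-D(f^{(1)}))=\{1,\dots,n-1\}$, producing every required length by a single chord reduction. The borderline subcase $|S|=n/2$ (which can only occur when $n$ is even and $r=n/2$) requires $S$ to be invariant under some nontrivial translation, and therefore a union of cosets of a proper subgroup of $\mathbb{Z}_n$; this rigidity also constrains the possible shapes of the hamiltonian edges $e_i$, so missing lengths can be recovered by using some $e_i$ as a chord---each $e_i$ has $r-2\geq 1$ vertices beyond $\{v_i,v_{i+1}\}$, and the pigeonhole obstruction that the $n$ edges of $C$ be distinct forces some $e_i$ to contain a pair with the missing difference.

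For the case $r=\lfloor(n-1)/2\rfloor$ and $c_r=6$, a single extra edge has only $\lfloor(n-1)/2\rfloor<n/2$ cycle-vertices, so the single-edge difference set may miss many residues and the single-edge argument is insufficient. Instead I invoke all six extra edges through $v_s$ jointly. For any two of them $f^{(j)},f^{(k)}$ and vertices $v_a\in f^{(j)}\setminus\{v_s\},\,v_b\in f^{(k)}\setminus\{v_s\}$, the Berge path $v_af^{(j)}v_sf^{(k)}v_b$ closes along a $C$-arc avoiding $v_s$ to yield a cycle of length $|a-b|+2$ (or its complement). With $\binom{6}{2}=15$ pairs of distinct extra edges these two-step combinations provide a much richer collection of achievable lengths than any single $D(f^{(k)})$. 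In parallel I would define the auxiliary shadow graph $G$ on $V(\cH)$ with $v_iv_j\in E(G)$ iff some hyperedge contains both vertices, note that the density contributed by the $n$ hamiltonian edges of size $r$ plus the six extra edges at $v_s$ verifies the minimum-degree hypotheses of Brandt's theorem (Theorem~\ref{Brandt}) or Hu--Sun (Theorem~\ref{HuSun2}), and lift the resulting $G$-cycles to Berge cycles of the same length in $\cH$ via a system-of-distinct-representatives (rainbow) selection of hyperedges.

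The main obstacle is handling the degenerate structural configurations---especially arithmetic-progression-like vertex sets for extra edges, which yield periodic difference sets missing whole residue classes---where one must combine chord reductions from several sources (extra edges, repurposed hamiltonian edges, two-step extra-edge detours) to fill every length. A secondary delicate point is the rainbow lifting from cycles in $G$ to Berge cycles in $\cH$: arranging distinct hyperedges along the cycle requires careful matching arguments, and this is most plausibly where the specific constant $c_r=6$ arises. Boundary lengths near $\ell=3$ and $\ell=n-1$ may require bespoke constructions not covered by the general auxiliary-graph machinery, and would need to be treated as base cases.
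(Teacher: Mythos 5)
Your Case $r > n/2$ matches the paper's argument exactly (pigeonhole on $e$ and its translate forces a $k$-chord for every $k$), and your general chord-reduction framing is Observation~\ref{chordsgivecycles}. But the two hard cases are not actually proved. For $r = n/2$ you assert that the rigidity of a translation-invariant $S$ plus ``the pigeonhole obstruction that the $n$ edges of $C$ be distinct'' forces some $e_i$ to contain a pair with the missing difference. That conclusion is exactly what needs proof, and it is not a pigeonhole: the paper obtains it by first locating two consecutive cycle vertices inside the extra edge $e$, swapping $e$ with $e_0$ to make $e_0$ an extra edge of a new hamiltonian cycle, showing $e_0$ would then also have to be $k$-SSC, and combining the shifting function $S_{n-k}$ with Proposition~\ref{SSC} to force $e = e_0$, a contradiction; the remaining subcase $e = \{v_0, v_2, \ldots, v_{n-2}\}$ needs a separate explicit construction of all even cycle lengths using $e$ together with $e_0$. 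Note also that an edge $e_i$ of $C$ cannot simply be ``used as a chord'' while it is still serving as the $i$th edge of the hamiltonian cycle, which is precisely why the swap is needed.

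For $r = \lfloor(n-1)/2\rfloor$ the proposal contains both an unsound step and an unresolved gap. The shadow-graph/Brandt route cannot work here: lifting a cycle of $G$ to a Berge cycle requires a system of distinct representatives, and with only $6$ extra edges at a single vertex almost every pair of $G$ has co-degree far too small for Hall's condition; this is why the paper reserves that method for Theorem~\ref{mainsmall}, where \emph{every} vertex lies in $5(r-1)+2$ extra edges. That leaves your two-step detours through $v_s$, and there the configuration you yourself flag as ``the main obstacle'' --- $\mathcal{U}_s$ of size exactly $r+1 = n/2$ arranged as a $k$-SSC set, so that the difference $k$ is never realized within $\mathcal{U}_s$ --- is precisely the heart of the paper's Cases 3 and 4. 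Handling it requires pruning $\mathcal{E}_s$ to a subfamily covering exactly $r+1$ vertices (Claim~\ref{tor+2}), a Hall-type argument selecting distinct edges for prescribed pairs (Claim~\ref{matchedge}), and explicit cycle constructions exploiting the coset structure of Proposition~\ref{SSC}, with a further special analysis when $k = n/2$. Flagging the obstacle is not the same as overcoming it, so the proposal does not constitute a proof of the theorem.
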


   \begin{thm}\label{mainsmall}
   
        Let $n \geq 19$ be an integer and set $r = \lfloor(n-1)/2 \rfloor - 1$. Let $\cH$ be an $r$-uniform hypergraph with a hamiltonian Berge cycle $C$. If every vertex in $\cH$ is contained in at least $5(r-1)+2$ edges outside of $E(C)$, then $\cH$ is pancyclic. 
       
   \end{thm}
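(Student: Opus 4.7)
The plan is to exhibit Berge cycles of every length $\ell \in \{3, \ldots, n-1\}$ by using extra edges as chords of the hamiltonian cycle $C$. The key operation is the single-chord shortcut: given $f \in \mathcal{E}$ with $v_i, v_j \in f$ and $i < j$, concatenating $f$ with either arc of $C$ between $v_i$ and $v_j$ yields a Berge cycle of length $j - i + 1$ or $n - (j - i) + 1$; edge-distinctness is automatic since $f \notin E(C)$. Hence it suffices to show that for every $\ell \in \{3, \ldots, n - 1\}$, some extra edge contains a pair of vertices at cyclic distance $\ell - 1$ or $n - \ell + 1$ on $C$.

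For a single extra edge $f$, viewed as an $r$-subset of $\mathbb{Z}/n\mathbb{Z}$, the difference set $f - f$ has size at least $2r - 1$ by standard Freiman--Ruzsa-type bounds, translating to at least $r - 1$ distinct nonzero cyclic distances realized by pairs inside $f$. Since $r = \lfloor (n-1)/2 \rfloor - 1$, this leaves at most three cyclic distances in $\{1, \ldots, \lfloor n/2 \rfloor\}$ missed by any particular edge. So every extra edge already produces Berge cycles of almost all required lengths, with only a bounded number of gaps per edge.

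To close the remaining gaps, I would invoke the hypothesis that each vertex $v$ lies in at least $5(r - 1) + 2$ extra edges. A pigeonhole argument over these edges should show that the union of realized distances covers all of $\{2, \ldots, \lfloor n/2 \rfloor\}$; the five-fold redundancy is presumably tuned to rule out extremal configurations in which the edges through $v$ conspire to avoid a common cyclic distance. For any length that still resists a single-chord shortcut, I would either combine two chords from distinct extra edges into a two-chord shortcut, or pass to the auxiliary graph $G$ on $V(\cH)$ with $v_iv_j \in E(G)$ whenever some extra edge contains both $v_i$ and $v_j$, and apply Theorem~\ref{Brandt} or Theorem~\ref{HuSun2} to extract cycles of the missing lengths. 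Cycles in $G$ lift to Berge cycles via a Hall-type system of distinct representatives, feasible owing to the high codegrees implied by the hypothesis.

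The principal obstacle is ensuring every length in $\{3, \ldots, n - 1\}$ is actually realized, especially in extremal configurations where chords are conspicuously absent at specific distances or where parity patterns among $r$-subsets of $\mathbb{Z}/n\mathbb{Z}$ conspire against the shortcut construction. A length-by-length analysis with a structural dichotomy---either find a shortcut directly, or derive a counting constraint contradicting the degree hypothesis---is likely required, with particular care at the boundary values $\ell = 3, 4$ and $\ell = n-1, n-2$ where single-chord shortcuts are most constrained. The coefficient $5$ in $5(r - 1) + 2$ and the requirement $n \geq 19$ are both presumably tight for excluding these pathological configurations.
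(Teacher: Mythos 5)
Your proposal has a genuine gap at its core counting step. The claim that an $r$-subset $f$ of $\Z/n\Z$ has $|f-f|\geq 2r-1$, so that each extra edge misses at most three cyclic distances, is false: that bound holds for subsets of $\Z$ (or of $\Z/p\Z$ with $p$ prime, by Cauchy--Davenport), but in $\Z/n\Z$ with $n$ composite the difference set can be trapped in a proper subgroup. Concretely, for $n=2r+4$ the edge $f=\{v_0,v_2,\dots,v_{2r-2}\}$ realizes only \emph{even} cyclic distances, so it misses roughly $r/2$ of the required lengths, not three. Thus ``every extra edge already produces Berge cycles of almost all required lengths'' fails, and the single-chord strategy collapses exactly in the structured configurations that matter (this is why the paper develops the $k$-self-shift-complementary machinery for the larger-uniformity cases). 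Moreover, a single chord only yields lengths $d+1$ and $n-d+1$ for the realized distances $d$, so even a full difference set would not by itself give lengths $3$ through $n-1$ without further argument.

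Everything after that is deferred to a ``pigeonhole argument \ldots should show'' and an auxiliary-graph fallback, but that fallback is where essentially all of the work lies, and it is the route the paper actually takes. The paper fixes a missing length $k+1$, uses the shifting function $S_{n-k}$ to force $r+2\leq|\U_i|\leq r+5$ for every vertex, deduces from the hypothesis $d_{\E}(v_i)\geq 5(r-1)+2$ that each $v_i$ has at least $r+1$ partners $v_j$ with co-degree $d_{\E}(v_i,v_j)\geq r$ (the threshold $r$ is what makes the Hall-type lifting of cycles from $G$ to Berge cycles work), builds the graph $G$ from these high-codegree pairs together with the consecutive pairs of $C$, proves a cycle-compatibility lemma, and then must still handle the case that $G$ is triangle-free via an edge-swap argument creating a triangle while preserving compatibility, reaching a contradiction for $r\geq 8$. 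Your proposal names the right auxiliary object and the right black boxes (Brandt, Hall), but it neither identifies the co-degree threshold, nor the mechanism ($S_s$ and $|\U_i|\leq r+5$) that produces the minimum degree of $G$, nor the triangle-free obstruction; as written it is a plan rather than a proof.
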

    
    \begin{proof}[Proof of Theorem~\ref{main} assuming Theorems~\ref{mainbig} and~\ref{mainsmall}.] 
        By Theorem~\ref{KLMthm}, $\cH$ has a hamiltonian Berge cycle $C$.
        If $r = \lfloor (n-1)/2 \rfloor -1$ and $\delta(\cH) \geq \binom{\lfloor (n-1)/2 \rfloor}{r-1}+1$, then each vertex is contained in at least $\binom{\lfloor (n-1)/2 \rfloor}{r-1}+1 - n = \frac{1}{2}\cdot\lfloor (n-1)/2 \rfloor \lfloor (n-3)/2 \rfloor + 1 - n$ edges outside of $E(C)$. This is at least $5(r-1)+2$ when $n \geq 31$, so we may apply Theorem~\ref{mainsmall}.
    
        Now suppose $r \geq \lfloor (n-1)/2 \rfloor$ and
        fix an $r$-uniform $n$-vertex hypergraph $\cH$ satisfying the statement of Theorem~\ref{main}. For $r = \lfloor (n-1)/2 \rfloor$, $\delta(\cH) \geq \binom{\lfloor (n-1)/2 \rfloor}{r-1} +1 = r+1$, and for $r \geq n/2$, $\delta(\cH) \geq r$. In either case, $\delta(\cH) \geq n/2$.

        Set $X = V(\cH)$ and $Y = E(\cH)$. The \textit{incidence bipartite graph} of $\cH$ is the bipartite graph $G=X\sqcup Y$ where $v_ie\in E(G)$ if and only if $v_i\in e$. A cycle of length $2\ell$ in $G$ yields a Berge cycle of length $\ell$ in $\cH$, and vice versa. Thus if $G$ is weakly bipancyclic with girth $4$ and circumference $2n$, then $\cH$ is pancyclic. By Theorem~\ref{KLMthm}, $G$ has a cycle of length $2n$. 
        
        Observe that if $|E(\cH)| = m$, then $G$ has $m+n$ vertices, where $m \geq n$. Moreover, $d_G(x) \geq \delta(\cH) \geq n/2$ and  $d_G(y) = r$ for all $x \in X$ and  $y \in Y$. Thus $\delta(G) \geq \lfloor (n-1)/2\rfloor$. 
        
        By Theorem~\ref{HuSun2}, if $\delta(G) \geq m/3 + 4$, then $G$ is weakly pancyclic with girth $4$. That is, $\cH$ is pancyclic. So suppose $\lfloor (n-1)/2\rfloor \leq \delta(G) \leq m/3 + 4$. This implies $m \geq 3n/2 - 15$. Hence the set of extra edges, $\E = E(\cH) - E(C)$, contains at least $n/2 - 15$ edges. Since each edge has $r$ vertices, there exists a vertex with $d_{\E}(v) \geq \frac{r(n/2-15)}{n} \geq  c_r$ for $n \geq 55$. Thus we may apply Theorem~\ref{mainbig} to obtain that $\cH$ is pancyclic.
    \end{proof}

    In the remainder of the paper we prove Theorems~\ref{mainbig} and~\ref{mainsmall}. We handle the cases of $r = \lfloor (n-1)/2 \rfloor-1$, $r = \lfloor (n-1)/2 \rfloor$, and $r \geq n/2$ separately. Equivalently, these are the cases $n \in \{2r +3, 2r+4\}$, $n \in \{2r+1, 2r+2\}$, and $n \leq 2r$. A common tool will be the shifting functions described in the next subsection.

    \subsection{Shifting functions}
         Recall that $\cH$ has hamiltonian cycle $C= v_0e_0v_1...v_{n-1}e_{n-1}v_1$.
         Let $A\subset V(\mathcal{H})$. For $s\in \{0,\dots,n-1\}$, define the shifted set 
         \[
            A^{+s} = \{v_{i+s}: v_i\in A\},
        \]
    where addition of indices is performed modulo $n$. For $e\in \E$, we say that $e$ contains a \textit{$k$-chord} if $\{v_i, v_{i+k}\}\subset e$ for some $v_i$.
        \begin{observation}\label{chordsgivecycles}
             If an extra edge $e\in \mathcal{E}$ contains a $k$-chord $\{v_i,v_{i+k}\}$, then $\mathcal{H}$ contains a $(k+1)$-Berge cycle. 
        \end{observation}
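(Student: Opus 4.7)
The plan is to exhibit the required $(k+1)$-Berge cycle explicitly. Given the $k$-chord $\{v_i, v_{i+k}\} \subseteq e$ with $e \in \E$, I would form the closed alternating sequence
\[
v_i\, e_i\, v_{i+1}\, e_{i+1}\, \cdots\, v_{i+k-1}\, e_{i+k-1}\, v_{i+k}\, e\, v_i,
\]
which traces the segment of $C$ from $v_i$ to $v_{i+k}$ along its $k$ consecutive cycle edges and then closes up via the extra edge $e$.

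To verify this is a Berge cycle of length $k+1$, I would check three things: the $k+1$ listed vertices are distinct, the $k+1$ listed edges are distinct, and each consecutive vertex pair lies in its accompanying edge. The vertex distinctness is immediate, since $v_0,\ldots,v_{n-1}$ are the distinct vertices of $V(\cH)$ and the interpretation of a $k$-chord (with $k \not\equiv 0 \pmod{n}$) forces the $k+1$ indices $i,i+1,\ldots,i+k$ to be pairwise distinct modulo $n$. The edges $e_i,\ldots,e_{i+k-1}$ are distinct as consecutive edges of the hamiltonian Berge cycle $C$, and $e$ differs from each of them because $e \in \E = E(\cH) - E(C)$. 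The containments $\{v_j,v_{j+1}\} \subseteq e_j$ for $i \leq j \leq i+k-1$ follow from $C$ being a Berge cycle, while $\{v_{i+k},v_i\} \subseteq e$ is precisely the hypothesis that $e$ carries a $k$-chord at $v_i$.

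The statement is essentially a direct reformulation of the definition of a Berge cycle applied to the shortcut formed by $e$, so there is no substantive obstacle. The value of recording it as an observation is that it will act as the workhorse for the remainder of the paper: the central strategy is to locate chords of prescribed lengths inside extra edges (often by shifting sets and counting via the shifting functions introduced in the next subsection), and then invoke this observation to convert such chords directly into Berge cycles of the corresponding lengths.
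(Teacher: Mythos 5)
Your construction is exactly the one the paper uses: traverse the segment of $C$ from $v_i$ to $v_{i+k}$ via $e_i,\dots,e_{i+k-1}$ and close up with the extra edge $e$, giving a Berge cycle of length $k+1$. The verification of vertex and edge distinctness that you spell out is left implicit in the paper, but the argument is identical.
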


        Indeed, the Berge cycle $v_i e_{i} \dots e_{i+k-1} v_{i+k} ev_i$ has length $k+1$. 

    
         For $s\in \{0,\dots, n-1\}$ define the shifting function $S_s: V(\cH)\rightarrow V(\cH)$ by
        \[
            S_s(v_i) = \begin{cases}
            		      v_{i+s}& i+s\leq n-1\\
            		      v_{i+s+1}& i+s\geq n
            	       \end{cases}.
        \]
         We call $s$ the \textit{shifting constant}. For a set $A \subseteq V(\cH)$, we denote $S_s(A) = \{S_s(v_i):v_i \in A\}$. 
         
        \begin{lemma}[Properties of the shifting function $S_s$]\label{Sshift} 
            Fix $s\in \{0,1,...,n-1\}$. Then 
            \begin{enumerate}[(a)]
                \item \label{shiftInjective}$|S_s(A)| \geq |A|-1$ for any $A\subseteq V(\mathcal{H})$ with equality if and only if $v_0,v_{n-1}\in A$ and 
                \item \label{shiftCycle} if $S_s(\mathcal{U}_0)\cap e_0 \neq \emptyset$, then $\mathcal{H}$ has an $(n-s+1)$-Berge cycle (provided $1\leq s\leq n-2$). 
            \end{enumerate} 
        \end{lemma}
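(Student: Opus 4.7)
\medskip\noindent\textbf{Proof proposal.}

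For part (a), my plan is to compute directly from the piecewise definition of $S_s$ to classify all collisions $S_s(v_i) = S_s(v_j)$ with $i<j$ in $\{0,\ldots,n-1\}$. Splitting on whether each of $i+s, j+s$ lies in $[0,n-1]$ or $[n,2n-1]$, the two same-side sub-cases immediately force $i=j$. Only the mixed sub-case $i+s \leq n-1 < j+s$ can produce a collision, and in it the identity $v_{i+s} = v_{j+s+1-n}$ forces $j-i \equiv n-1 \pmod{n}$, hence $\{i,j\} = \{0,n-1\}$. This shows $S_s$ is injective on $V(\cH)\setminus\{v_{n-1}\}$, so $|S_s(A)| \geq |A|-1$; for $s\geq 1$ one checks $S_s(v_0) = v_s = S_s(v_{n-1})$, so equality holds exactly when $\{v_0, v_{n-1}\} \subseteq A$.

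For part (b), I would choose $v_i \in \mathcal{U}_0$ with $S_s(v_i) \in e_0$ and fix an extra edge $e \in \mathcal{E}_0$ containing $\{v_0, v_i\}$. The idea is to combine an arc of the hamiltonian cycle $C$ with the two ``shortcut'' edges $e$ and $e_0$ (the latter now distinguished by containing $S_s(v_i)$ in addition to $v_0, v_1$) to construct an $(n-s+1)$-Berge cycle. The construction splits on whether $i+s \leq n-1$, in which case $v_{i+s} \in e_0$, or $i+s \geq n$, in which case $v_{i+s+1-n} \in e_0$.

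In the main case $i \geq 1$ with $i+s \leq n-1$, I would use the Berge cycle
\[
v_0\, e\, v_i\, e_{i-1}\, v_{i-1}\, \cdots\, e_1\, v_1\, e_0\, v_{i+s}\, e_{i+s}\, \cdots\, e_{n-1}\, v_0,
\]
which jumps $v_0 \to v_i$ via $e$, walks backward along $C$ to $v_1$, skips to $v_{i+s}$ via $e_0$, and closes along the arc of $C$ from $v_{i+s}$ through $v_{n-1}$ back to $v_0$. For the wrap-around case $i \geq 1$ with $i+s \geq n$, setting $j = i+s+1-n \in [1,s]$, the symmetric cycle
\[
v_0\, e_0\, v_j\, e_j\, v_{j+1}\, \cdots\, e_{i-1}\, v_i\, e\, v_0
\]
suffices. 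In the boundary case $i=0$, the hypothesis reduces to $v_s \in e_0$, and $v_0\, e_0\, v_s\, e_s\, \cdots\, e_{n-1}\, v_0$ already has length $n-s+1$ without using $e$. A short index check verifies $n-s+1$ distinct vertices and $n-s+1$ distinct edges in each case: the omitted $s-1$ vertices form a contiguous gap along $C$, and $e$ (when used) is an extra edge, hence distinct from the hamiltonian edges. The main obstacle is just the bookkeeping around the ``$+1$'' correction in the definition of $S_s$ when the shift crosses $v_{n-1}$; this is precisely what forces the split into two arithmetic cases above.
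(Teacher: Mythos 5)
Your proposal is correct and takes essentially the same route as the paper: part (a) rests on the same observation that the only collision of $S_s$ is $S_s(v_0)=v_s=S_s(v_{n-1})$ (you verify the injectivity on $\{v_0,\ldots,v_{n-2}\}$ by explicit case analysis where the paper simply asserts it), and part (b) builds exactly the same two Berge cycles, split on whether the shift wraps past $v_{n-1}$, with the same degenerate sub-case at $i=0$. The only (cosmetic) difference is your added care that the $v_0$--$v_{n-1}$ collision requires $s\geq 1$, a point the paper's statement and proof gloss over but which never matters in its applications.
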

        \begin{proof}
            Fix $s\in \{0,\dots,n-1\}$. To establish part $(a)$, note that $S_s(v_0) = v_s = S_s(v_{n-1})$. On the other hand, $S_s$ is injective when restricted to $\{v_0, \dots,v_{n-2}\}$. Thus $(a)$ holds.
            
            We now prove part $(b)$. Suppose $v_i \in S_s(\U_0) \cap e_0$. Then there exists $v_j\in \U _0$ such that $S_s(v_j)=v_i\in e_0$. By the definition of $\U_0$ there exists an extra edge $f\in \E_0$ with $v_0,v_j\in f$. We consider two cases.
            
            \textit{Case 1: $v_i=v_{j+s}$.} By the definition of the shifting function $S_s$, $j+s\leq n-1$. When $j\neq 0$, the Berge cycle 
            \[
                v_0fv_{j}e_{j-1}v_{j-1}\dots v_1e_0 v_{j+s} e_{j+s} v_{j+s+1}\dots v_{n-1}e_{n-1}v_0,
            \]
            has length $n-s+1$ since it omits exactly the vertices $v_k$ with $ j< k<j+s$. See Figure~\ref{fig:S_scycle} for reference. If $j=0$, then this collapses to 
            \[
                v_0e_0v_se_{s} v_{s+1}\dots v_{n-1}e_{n-1}v_0,
            \]
            which again is an $(n-s+1)$-Berge cycle. 
            
            \textit{Case 2: $v_i=v_{j+s+1}$.} Note that $j+s\geq n$. When $j\neq n-s$, we obtain the Berge cycle
            \[
                v_0e_0v_{j+s+1}e_{j+s+1}v_{j+s+2}\dots v_{j-1}e_{j-1} v_j fv_0,
            \]
            which has length $n-s+1$ because it omits the vertices strictly between $v_j$ and $v_{j+s+1}$ except for $v_0$. See Figure \ref{fig:S_scycle} for reference. If $j=n-s$, then $v_i=v_1$ and this collapses to 
            \[
                v_0e_0v_1e_{1}v_2\dots v_{n-s-1}e_{n-s-1}v_{n-s} fv_0,
            \]
            which also has length $n-s+1$. This completes the proof. 
        \end{proof}
        \begin{figure}[h]
            \centering
            \begin{tikzpicture}[shorten >=1pt, minimum size=.01cm, auto, node distance=5cm, thick, line cap=rect,
                    main node/.style={circle,inner sep = 2pt, outer sep = 0pt, minimum width = 2pt, fill=black, draw, font=\sffamily\bfseries},
                    main2 node/.style={inner sep = 0.5pt, outer sep = 0pt, fill=white, font=\sffamily}
                    ]
                \def\mylist{0/0, -1/1, -7/j, -9/j+s}
                \def \rone{1.98}
                \def \rtwo{2.35}
                \def \rthree{2}
                \def \rfour{1.85}
                \def \rfive{2.0}
                
                \node[main2 node, below = 3pt] (e0) at ({\rtwo*cos(79)}, {\rtwo*sin(79)}) {$e_0$};
                \draw[thin] (0,0) circle (\rone);
                \foreach \t/\name in \mylist {
                    \node[main2 node] (v\name') at ({\rtwo*cos(24*\t+90)},{\rtwo*sin(24*\t+90)}) {$v_{\name}$};
                    \node[main node] (v\name) at ({\rone*cos(24*\t+90)},{\rone*sin(24*\t+90)}) {};
                }
                \draw[very thick,domain=282:426] plot ({\rone*cos(\x)}, {\rone*sin(\x)});
                \draw[very thick,domain=90:234] plot ({\rone*cos(\x)}, {\rone*sin(\x)});
                \draw[fill = lightgray, line width =0pt] (v0) to[bend left = 10] (vj+s) to[bend right = 10] ({\rfive*cos(-24 +90)}, {\rfive*sin(-24 +90)}) arc (66:90:\rfive);
                \draw[very thick] (v0) to[bend right = 30] node[pos = 0.8, right] {$f$} (vj);
                \foreach \t/\name in \mylist {
                    \node[main node] (v\name) at ({\rone*cos(24*\t+90)},{\rone*sin(24*\t+90)}) {};
                }
                \draw[very thick] (vj+s) to[bend right = 10] (v1);

            \end{tikzpicture}
            \hspace{1em}
            \begin{tikzpicture}[shorten >=1pt, minimum size=.01cm, auto, node distance=5cm, thick, line cap=rect,
                    main node/.style={circle,inner sep = 2pt,  minimum width = 2pt, fill=black,draw,font=\sffamily\bfseries},
                    main2 node/.style={ fill=white,font=\sffamily}
                    ]
                \def\mylist{0/0, -3/j+s+1, 3/j}
                \def \rone{1.98}
                \def \rtwo{2.5}
                \def \rthree{1.98}
                \node[main2 node, below = 3pt] (e0) at ({\rtwo*cos(79)}, {\rtwo*sin(79)}) {$e_0$};
                \foreach \t/\name in \mylist {
                    \node[main2 node] (v\name') at ({\rtwo*cos(24*\t+90)},{\rtwo*sin(24*\t+90)}) {$v_{\name}$};
                    \node[main node] (v\name) at ({\rone*cos(24*\t+90)},{\rone*sin(24*\t+90)}) {};
                }
                \draw[thin] (0,0) circle (\rone);
                \draw[very thick] (v0) to[bend right = 30, below] node[pos = 0.55, below] {} (vj+s+1);
                \draw[very thick, domain=162:378] plot ({\rone*cos(\x)}, {\rone*sin(\x)});
                \draw[very thick] (vj) to[bend right = 30]  node[pos = 0.45, below]{$f$} (v0) ;
                \draw[fill = lightgray, line width = 0pt] (v0) to[bend right = 30] (vj+s+1) to[bend left = 30] ({\rthree*cos(-24 +90)}, {\rthree*sin(-24 +90)}) arc (66:90:\rthree);
                \foreach \t/\name in \mylist {
                    \node[main node] (v\name) at ({\rone*cos(24*\t+90)},{\rone*sin(24*\t+90)}) {};
                }
            \end{tikzpicture}
            \caption{An $(n-s+1)$-Berge cycle in cases 1 and 2 respectively.}
            \label{fig:S_scycle}
        \end{figure}
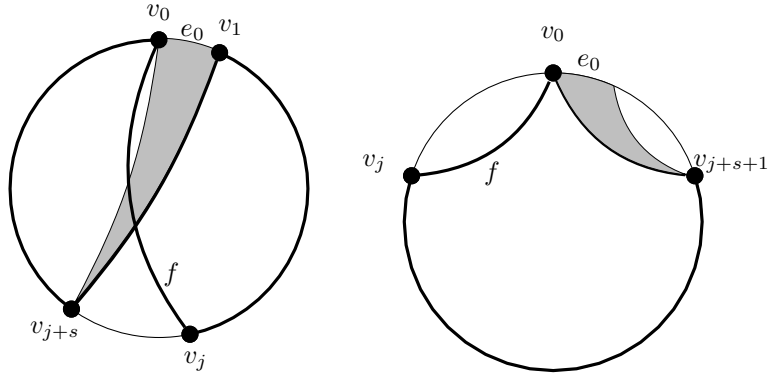
    \subsection{Self-Shift Complementary Sets}
        When forbidding $k$-chords for some $k \in [n-1]$, we will often consider sets $ \mathcal A\subset V(\cH)$ where $v_i\in  \mathcal A$ implies $v_{i+k}\notin \mathcal A$. If $A = \{i: v_i \in \mathcal A\}$ is the set of indices from $\mathcal{A}$, then $A\cap (k+A) = \emptyset$, where $k+A\coloneqq \{k+a \pmod{n}: a\in A\}$. Moreover, when $|A| = n/2$, then $A= \overline{k + A}$, the complement of $k+A$ in $\mathbb Z/n\mathbb Z$.
        For $A\subset \Z/n\Z$ and $k \in \Z/n\Z$, we say that $A$ is \textit{$k$-self-shift complimentary} ($k$-SSC for short) if $|A| = n/2$ and $A\cap (k+A)=\emptyset$.
        
        In the following proposition, we use some elementary group theory. For $k \in \Z/n\Z$, we denote by $\langle k \rangle$ the subgroup of $\Z/n\Z$ generated by $k$, i.e., $\langle k \rangle = \{ik \pmod{n}: i \in \mathbb Z\}$. 
        Then for $j \in \Z/n\Z$, $j + \langle k \rangle$ is a coset of the subgroup $\langle k \rangle$ in $\Z/n\Z$.

        \begin{prop}[SSC structure]\label{SSC}
            Let $k \in \{1, \ldots, n-1\}$. Suppose $A\subset \Z/n\Z$ with $0\in A$, $|A|=n/2$ and $(k+A)\cap A=\emptyset$. For $d=\gcd(n,k)$, the following hold. \begin{enumerate}[(i)]
                \item $\frac{n}{d}$ is even,
                \item $(d+A)\cap A= \emptyset$, and
                \item $A$ can be partitioned into $d$ sets $O_0, \ldots, O_{d-1}$ defined as follows. For $j\in \{0,\dots,d-1\}$, we define \[
                O_j = \begin{cases}
                    j+\langle2d\rangle &\text{ if }\, j\in A,\\
                    (j+d)+\langle2d\rangle&\text{ if }\, j\notin A.
                \end{cases}
                \] 
                Each $O_j$ alternates every other entry inside of the coset $j+\langle d \rangle$. 
            \end{enumerate}  
        \end{prop}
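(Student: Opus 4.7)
The plan is to reduce all three parts to a single functional identity on the indicator function $\chi = \mathbf{1}_A$. Since $|A| = n/2$ and $(k+A) \cap A = \emptyset$, the shifted set $k+A$ is forced to equal the complement of $A$, so $\chi(x+k) = 1-\chi(x)$ for every $x \in \Z/n\Z$. Iterating yields $\chi(x+jk) = \chi(x)$ when $j$ is even and $\chi(x+jk) = 1-\chi(x)$ when $j$ is odd.

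For part (i), I would apply this identity with $j = n/d$. Since $(n/d)k$ is a multiple of $n$, we have $\chi(x + (n/d)k) = \chi(x)$, so consistency with the alternation rule forces $n/d$ to be even.

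For part (ii), the goal is to upgrade the $k$-alternation into a $d$-alternation. I would invoke Bezout to write $d = un + vk$ for integers $u,v$, then divide through by $d$ to obtain $1 = u(n/d) + v(k/d)$. Because $n/d$ is even, $u(n/d)$ is even, so $v(k/d)$ is odd, forcing both $v$ and $k/d$ to be odd. Reducing $d \equiv vk \pmod{n}$ and applying the alternation rule with odd multiplier $v$ yields $\chi(x+d) = 1-\chi(x)$, which is precisely $(d+A) \cap A = \emptyset$.

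For part (iii), $\Z/n\Z$ partitions into the $d$ cosets of $\langle d \rangle$, each of even size $n/d$. By (ii), $\chi$ alternates as one steps by $d$ within such a coset, so $A$ meets each coset in every other element, producing an arithmetic progression of length $n/(2d)$ with common difference $2d$. Depending on whether the representative $j \in \{0,\dots,d-1\}$ lies in $A$ or not, this progression is $j + \langle 2d \rangle$ or $(j+d)+\langle 2d\rangle$, recovering the claimed $O_j$; disjointness across different $j$ follows from the coset partition.

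I expect the main obstacle to be part (ii): the jump from a $k$-alternation to a $d$-alternation is not purely combinatorial, since the Bezout coefficient $v$ is only well-defined modulo $n/d$. The observation that makes the argument clean is that part (i) makes $n/d$ even, so the parity of $v$ is in fact unambiguous, and the identity $1 = u(n/d) + v(k/d)$ then forces it to be odd.
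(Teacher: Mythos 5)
Your proposal is correct, and it follows the same underlying strategy as the paper (alternation along $k$-steps, then a transfer from $k$ to $d$), but the execution differs in three structurally meaningful ways. First, you obtain the alternation identity $\chi(x+k)=1-\chi(x)$ \emph{for all} $x$ in one line from $|A|=n/2$, which forces $k+A=\overline{A}$; the paper instead derives alternation only for $a\in A$ by a counting argument inside each coset $j+\langle k\rangle$ (each coset meets $A$ in exactly half its elements with no two consecutive). Second, you prove (ii) directly and deduce (iii) from it, whereas the paper proves (iii) first and reads (ii) off as a consequence. Third, the transfer from $k$-alternation to $d$-alternation is accomplished differently: you extract the parity of the B\'ezout coefficients from $1=u(n/d)+v(k/d)$ together with (i), while the paper argues that $k\notin A$ forces $k\notin\langle 2d\rangle$, hence $k\equiv qd$ with $q$ odd. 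Your route is somewhat cleaner --- the global functional identity makes the iteration and the negative-multiplier case painless, and your closing remark correctly identifies and resolves the only delicate point (the parity of $v$ is well defined precisely because $n/d$ is even). The paper's route has the mild advantage that the coset-counting step exhibits the sets $O_j'$ explicitly before identifying them with the $O_j$, which matches the exact phrasing of (iii); with your argument you should just add the sentence confirming that the progression of $A$-elements inside $j+\langle d\rangle$ is literally $j+\langle 2d\rangle$ or $(j+d)+\langle 2d\rangle$ according to whether $j\in A$, which you have already sketched.
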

        
        \begin{proof}
            Inside the group $\Z/n\Z$, we have $\langle k \rangle=\langle d\rangle$. Clearly we have the inclusion $\langle k \rangle\subseteq\langle d\rangle$. To get the other inclusion, use Bézout's identity to write $d=xk+yn$ for some $x,y\in \Z$ so that $d\equiv xk\pmod n$. Let us consider the cosets $\hat{O}_j := j + \langle k\rangle$ (see Figure~\ref{Ohat}). Since $\langle k \rangle = \langle d \rangle$, $\hat{O}_0, \ldots, \hat{O}_{d-1}$ partition $\Z/n\Z$. If there exists $j$ such that $|A \cap \hat O_j| > |\hat O_j|/2$, then for some $\ell \in \mathbb Z$, $j+\ell k$ and  $j + (\ell+1)k \in A$. This contradicts that $A \cap (k+A) =\emptyset$. Moreover, since $|A| = n/2$, we have that $|A \cap \hat O_j|$ is exactly $|\hat{O}_j|/2$ for all $j$. But since $A$ is $k$-SSC, this forces the elements $j,j+k,\dots,j+(\frac{n}{d}-1)k$ to alternate between being members and non-members of $A$. In particular, we have that 
    \begin{equation}\label{alternate}
                \text{for all } a\in A,  \quad a+qk\in A \iff q\equiv 0 \pmod 2.
            \end{equation}


\begin{figure}
        \centering\begin{tikzpicture}[scale=.70]
    \def\radius{3}
    \draw (0,0) circle (\radius cm);

    \foreach \i in {0,...,19} {
        \pgfmathsetmacro{\angle}{90 - 360/20 * \i} 
        \pgfmathsetmacro{\x}{\radius * cos(\angle)}
        \pgfmathsetmacro{\y}{\radius * sin(\angle)}
        \coordinate (V\i) at (\x,\y);
    }

    \draw[dotted, thick] (V0) -- (V6);
    \draw[dotted, thick] (V6) -- (V12);
    \draw[dotted, thick] (V12) -- (V18);
    \draw[dotted, thick] (V18) -- (V4);
    \draw[dotted, thick] (V4) -- (V10);
    \draw[dotted, thick] (V10) -- (V16);
    \draw[dotted, thick] (V16) -- (V2);
    \draw[dotted, thick] (V2) -- (V8);
    \draw[dotted, thick] (V8) -- (V14);
    \draw[dotted, thick] (V14) -- (V0);

    \foreach \i in {0,...,19} {
        \pgfmathsetmacro{\angle}{90 - 360/20 * \i} 
        \pgfmathsetmacro{\x}{\radius * cos(\angle)}
        \pgfmathsetmacro{\y}{\radius * sin(\angle)}
        
        \ifnum\i=0 \fill[gray!50] (\x,\y) circle (4pt); 
        \else\ifnum\i=12 \fill[gray!50] (\x,\y) circle (4pt); 
        \else\ifnum\i=4 \fill[gray!50] (\x,\y) circle (4pt); 
        \else\ifnum\i=16 \fill[gray!50] (\x,\y) circle (4pt); 
        \else\ifnum\i=8 \fill[gray!50] (\x,\y) circle (4pt); 
        \else\ifnum\i=6 \fill[white, draw=gray, thick] (\x,\y) circle (4pt); 
        \else\ifnum\i=18 \fill[white, draw=gray, thick] (\x,\y) circle (4pt); 
        \else\ifnum\i=10 \fill[white, draw=gray, thick] (\x,\y) circle (4pt); 
        \else\ifnum\i=2 \fill[white, draw=gray, thick] (\x,\y) circle (4pt); 
        \else\ifnum\i=14 \fill[white, draw=gray, thick] (\x,\y) circle (4pt); 
        \else \fill[black] (\x,\y) circle (4pt); 
        \fi\fi\fi\fi\fi\fi\fi\fi\fi\fi
        
        \node[font=\small] at (\x*1.15,\y*1.15) {\i};
    }
\end{tikzpicture}
\caption{The set $\hat O_0$ with $n = 20, k=6$. Gray points are in $A$ and white points in $\overline{A}$.}\label{Ohat}
\end{figure}
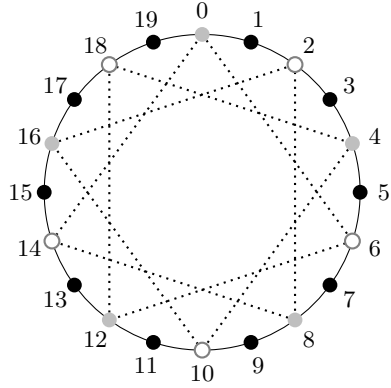
            
            If $\frac{n}{d}$ is odd then~\eqref{alternate} implies $0 + \frac{n}{d} k \notin A$. But $\frac{n}{d} k \equiv 0 \pmod{n}$, and so $0 \in A$ and $0 \notin A$, a contradiction.  This proves (i). We claim (iii) implies (ii); one checks that the definition of $O_j$ and (i) forces $(d+O_j)\cap O_j=\emptyset$. Since the $O_j$'s are a partition of $A$, (ii) follows.
            
            It now suffices to show (iii). As a direct application of \eqref{alternate}, we have the following. For every $j\in \{0,\dots,d-1\}$, we have $j+\langle 2k \rangle \subseteq A$ if and only if $j\in A$. Similarly, $(j+k)+\langle 2k \rangle \subseteq A$ if and only if $j\notin A$. This gives a partition of $A$ into the sets $O_j'$ where 
            \[
                O_j' = \begin{cases}
                        j+\langle2k\rangle &\text{ if }\,j\in A,\\
                        (j+k)+\langle2k\rangle&\text{ if }\,j\notin A.
                    \end{cases}
            \]

            
            We show that $O_j'=O_j$. First note that $\langle 2k\rangle = \langle 2d\rangle$ (again use Bézout's identity). Since $0\in A$, $k\notin A$, and thus,  $k\notin O_0' = \langle 2d\rangle$. Hence $k=qd$ for some odd $q$. Fix an arbitrary $j\in \{0,\dots,d-1\}$. If $j\in A$, then $O_j' = j+\langle 2k\rangle = j+\langle 2d\rangle = O_j$. On the other hand, if $j\notin A$ then
            \begin{align*}
                O_j' &= (j+k)+\langle 2k\rangle\\
                    &= (j+k)+\langle 2d\rangle\\
                    &= (j+qd)+\langle 2d\rangle\\
                    &= (j+d)+\langle 2d\rangle\\
                    &= O_j.
            \end{align*}
        \end{proof}
{\bf Remark}:  Proposition~\ref{SSC} also implies that a $k$-SSC set is uniquely determined by its behavior on {\em any} interval of length $d=\gcd(n,k)$, not just $\{0,\ldots d-1\}$.

\section{Proof of Theorem \ref{mainbig}  \texorpdfstring{($r \geq \lfloor \frac{n-1}{2}\rfloor$)}{}}
    
    Let $r\geq \lfloor\frac{ n-1}{2}\rfloor$ and suppose $\cH$ is an $n$-vertex, $r$-uniform hypergraph with a hamiltonian Berge cycle $C=v_0e_0v_1...e_{n-2}v_{n-1}e_{n-1}v_0$. 
    We break our proof into cases: $r > n/2, r=n/2$, and $r = \lfloor (n-1)/2 \rfloor$. The latter case can be further broken down into cases $n = 2r+1$ and $n=2r+2$ depending on the parity of $n$. 
    Recall $\E_i = \{e \in E(\cH) - E(C): v_i \in e\}$, $\U_i = \bigcup_{e \in \E_i} e$, and some vertex is contained in at least $c_r$ extra edges outside of $E(C)$.
    
    \medskip
    {\bf Case 1}: $r > n/2$. By the case, we have at least $c_r = 1$ extra edges; let $e \in \E$ be one such edge. Fix any $k+1\in \{3,...,n-1\}$. Since $|e| > n/2$, the pigeonhole principle gives $e\cap e^{+k} \neq \emptyset$. This implies $e$ contains some $k$-chord, so $\cH$ has a $(k+1)$-Berge cycle by Observation \ref{chordsgivecycles}. Since $k+1$ was arbitrary, $\cH$ has Berge cycles of every desired length and is therefore pancyclic.

\medskip
{\bf Case 2}: $r = n/2$.
	Again by the case, there exists an extra edge $e \in \E$. Suppose that $\cH$ contains no $(k+1)$-Berge cycle for some $k+1\in \{3,\dots, n-1\}$. By Observation~\ref{chordsgivecycles}, $e$ contains no $k$-chords. This gives $e \cap e^{+k}= \emptyset$. Moreover $|e| = n/2$, and so the set of indices $\{i: v_i \in e\}$ is $k$-SSC. 
    
    {\em Subcase 2.1}: 
        There exists $i$ such that $\{v_i, v_{i+1}\} \subset e$. Since $|e| < n$, we may choose $i$ such that
        $\{v_i, v_{i+1}\}\subset e$ and $v_{i-1}\notin e$ for some $i$. Without loss of generality, say $\{v_0,v_1\}\subset e$ and $v_{n-1}\notin e$. 

        We obtain another hamiltonian Berge cycle $C' = v_0ev_1\ldots v_{n-1}e_{n-1} v_0$ by swapping edges $e_0$ and $e$. With respect to $C'$, $e_0$ is an extra edge. If $e_0$ contains a $k$-chord then we get a $(k+1)$-Berge cycle by Observation~\ref{chordsgivecycles}, a contradiction. Therefore, we assume $\{i: v_i \in e_0\}$ is also $k$-SSC.
        
        Set shifting constant $s=n-k$ so that $n-s+1 = k+1$. By Lemma~\ref{Sshift}, $e_0\cap S_s(e)=\emptyset$, and since $v_{n-1}\notin e$, $|S_s(e)|=r$. Therefore,
        \[r + r = |S_s(e)| + |e_0| \leq n.\] But $r=n/2$, so we further obtain \begin{equation}\label{compliment}
        e_0 = \overline{S_s(e)},\end{equation}
        where we use $\overline{S_s(e)}$ to denote the compliment $V(\cH)- S_s(e)$. Set $d=\gcd(n,k)$. We will derive a contradiction by showing that $v_{n-i}\notin e$ if and only if $v_{n-i}\notin e_0$ for all $i\in \{0,\dots,d-1\}$. By Proposition \ref{SSC}, $k$-SSC sets are determined uniquely by their behavior on any consecutive $d$ vertices. It will then follow that $e=e_0$, a contradiction. Fix $i\in \{0,\dots,d-1\}$. Since $i<d\leq k$, we have $S_s(v_{k-i}) = v_{k-i+(n-k)}=v_{n-i}$. Then \begin{align*}
        	v_{n-i}\notin e &\iff v_{n-i+k}=v_{k-i}\in e\quad \text{since $e$ is $k$-SSC.}\\
        	&\iff S_s(v_{k-i})=v_{n-i}\in S_s(e) \quad \text{since $S_s$ is an injection on $e$.}\\
        	&\iff v_{n-i}\notin e_0 \quad\text{ by~\eqref{compliment}}.
        \end{align*}
        This gives the aforementioned contradiction.
        
    {\em Subcase 2.2}: For all $i$, $\{i,i+1\}\not\subseteq e$. Without loss of generality, let $e = \{v_0, v_2, v_4, \ldots, v_{n-2}\}$.
        Observe that $e$ contains an $\ell$-chord for all $\ell\in \{2,4,\dots , n-2\}$. Hence $\cH$ has an $(\ell+1)$-Berge cycle by Observation \ref{chordsgivecycles}. Equivalently, $\cH$ contains a Berge cycle of every odd length. We now aim to show that $\cH$ has a Berge cycle  of every even length. To do so, fix $v_j\in e_0$ such that $2\leq j \leq n-2$. Such a $j$ exists since $r=\frac{n}{2}$. 
        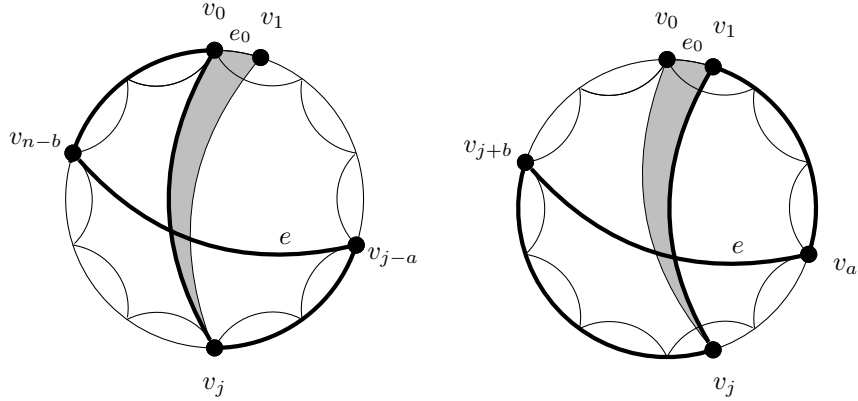
\begin{figure}[h!]
            \centering
            \begin{tikzpicture}[shorten >=1pt, minimum size=.01cm, auto, node distance=5cm, thick, line cap=rect,
                main node/.style={circle,inner sep = 2pt,  minimum width = 2pt, fill=black,draw,font=\sffamily\bfseries},
                main2 node/.style={ fill=white, inner sep = 0pt, outer sep = 0pt, font=\sffamily}
                ]
                \def\angs{0, 1, ..., 14}
                \def\specified{0/0, 1/-1, j-a/-6, j/-10, n-b/-16}
                \def \rone{1.98}
                \def \rtwo{2.5}
                \def \rthree{2.2}
                \def \rfour{1.8}
                \def \rfive{1.98}
                \node[main2 node] (e) at ({\rthree*cos(81)}, {\rthree*sin(81)}) {\small $e_0$};
                \foreach \name/\ang in \specified {
                    \node[main2 node] (v\name') at ({\rtwo*cos(18*\ang+90)},{\rtwo*sin(18*\ang+90)}) {$v_{\name}$};
                    \node[main node] (v\name) at ({\rone*cos(18*\ang+90)},{\rone*sin(18*\ang+90)}) {};
                }
                \draw[fill = lightgray, line width = 0pt] (v0) to[bend right = 30] (vj) to[bend left = 30] ({\rfive*cos(72)}, {\rfive*sin(72)}) arc (72:90:\rfive);
                \draw[ultra thick] (v0) to[bend right = 30] (vj);
                \draw[ultra thick] (vj-a) to[bend left = 30] node[pos = 0.2, above] {$e$} (vn-b);
                \draw[ultra thick,domain=270:342] plot ({\rone*cos(\x)}, {\rone*sin(\x)});
                \draw[ultra thick,domain=90:166] plot ({\rone*cos(\x)}, {\rone*sin(\x)});
                \draw[thin] (0,0) circle (\rone);
                \foreach \t in {0, 2, 4, ..., 20}{
                    \draw[thin] ({\rone*cos(18*\t+90)},{\rone*sin(18*\t+90)}) to[bend left = 45]  ({\rone*cos(18*\t+126)},{\rone*sin(18*\t+126)});
                }
                \foreach \name/\ang in \specified {
                    \node[main node] (v\name) at ({\rone*cos(18*\ang+90)},{\rone*sin(18*\ang+90)}) {};
                }
            \end{tikzpicture}
            \hspace{1em}
            \begin{tikzpicture}[shorten >=1pt, minimum size=.01cm, auto, node distance=5cm, thick, line cap=rect,
                main node/.style={circle,inner sep = 2pt,  minimum width = 2pt, fill=black,draw,font=\sffamily\bfseries},
                main2 node/.style={ fill=white, inner sep = 0pt, outer sep = 0pt, font=\sffamily}
                ]
                \def\angs{0, 1, ..., 14}
                \def\specified{0/0, 1/-1, a/-6, j/-9, j+b/-16}
                \def \rone{1.98}
                \def \rtwo{2.5}
                \def \rthree{2.2}
                \def \rfour{1.8}
                \def \rfive{1.98}
                \node[main2 node] (e) at ({\rthree*cos(81)}, {\rthree*sin(81)}) {\small $e_0$};
                \foreach \name/\ang in \specified {
                    \node[main2 node] (v\name') at ({\rtwo*cos(18*\ang+90)},{\rtwo*sin(18*\ang+90)}) {$v_{\name}$};
                    \node[main node] (v\name) at ({\rone*cos(18*\ang+90)},{\rone*sin(18*\ang+90)}) {};
                }
                \draw[ultra thick,domain=-18:72] plot ({\rone*cos(\x)}, {\rone*sin(\x)});
                \draw[ultra thick,domain=162:288] plot ({\rone*cos(\x)}, {\rone*sin(\x)});
                \draw[fill = lightgray, line width = 0pt] (v0) to[bend right = 30] (vj) to[bend left = 30] ({\rfive*cos(72)}, {\rfive*sin(72)}) arc (72:90:\rfive);
                \draw[ultra thick] (v1) to[bend right = 30] (vj);
                \draw[ultra thick] (va) to[bend left = 30] node[pos = 0.2, above] {$e$} (vj+b);
                \draw[thin] (0,0) circle (\rone);
                \foreach \t in {0, 2, 4, ..., 20}{
                    \draw[thin] ({\rone*cos(18*\t+90)},{\rone*sin(18*\t+90)}) to[bend left = 45]  ({\rone*cos(18*\t+126)},{\rone*sin(18*\t+126)});
                }
                \foreach \name/\ang in \specified {
                    \node[main node] (v\name) at ({\rone*cos(18*\ang+90)},{\rone*sin(18*\ang+90)}) {};
                }
            \end{tikzpicture}
            \caption{Even length Berge cycles.}
            \label{fig:subcase2.2}
        \end{figure}
    
        Suppose first that $j$ is even. Let $a$ and $b$ be even integers such that $0 \leq a < j$ and $0\leq  b < n-j$. The Berge cycle
        \[
            v_0e_0v_j...v_{j-a}ev_{n-b}e_{n-b+1}...v_0
        \]
        has length $a+b+2$ (see left on Figure \ref{fig:subcase2.2}). By varying $a$ and $b$, we are able to construct Berge cycles of every even length from 4 to $n-2$.
        
        Suppose instead that $j$ is odd. Let $a$ and $b$ be integers such that $a$ is even and $b$ is odd satisfying $2\leq a <j$ and $1\leq b \leq n-j$. The Berge cycle
        \[
            v_1e_1\dots v_aev_{j+b} \dots v_je_0v_1
        \]
        has length $(a-1)+b+2= a+b+1$ (see right on Figure \ref{fig:subcase2.2}). 
        Since we can find Berge cycles of every length, $\cH$ is pancyclic.
        
    \medskip
    {\bf Case 3}: $r = \lfloor (n-1)/2 \rfloor$ and $n = 2r+1$.  
        Without loss of generality, assume that $v_0$ is contained in at least $c_r=6$ extra edges as per the hypothesis of Theorem \ref{mainbig}. We begin with the following observation.
        \begin{observation}\label{kchord}
            If $|\U_i| = r+1$ and $|\E_i|\geq 3$, then for every pair $v_j, v_k \in \U_i$, there exists an $e \in \E_i$ containing $\{v_j,v_k\}$.
        \end{observation}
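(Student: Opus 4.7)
The plan is to exploit the very restrictive structure forced by the assumption $|\U_i| = r+1$. Every edge $e \in \E_i$ has exactly $r$ vertices, contains $v_i$, and is a subset of $\U_i$. Since $|\U_i \setminus \{v_i\}| = r$, each such edge omits exactly one vertex of $\U_i \setminus \{v_i\}$. In other words, the edges of $\E_i$ are in one-to-one correspondence with a subset of $\U_i \setminus \{v_i\}$ via the map $e \mapsto \U_i \setminus e$. In particular, there are at most $r$ candidate edges in $\E_i$, each uniquely determined by the single vertex it misses.

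First I would dispatch the trivial cases. If $v_j = v_i$ (or symmetrically $v_k = v_i$), then since $v_k \in \U_i$ there is by definition some $e \in \E_i$ containing $v_k$, and this $e$ also contains $v_i$ by definition of $\E_i$, so $\{v_j,v_k\} \subseteq e$. (The case $v_j = v_k = v_i$ is immediate from $\E_i \neq \emptyset$.)

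The main case is $v_j, v_k \in \U_i \setminus \{v_i\}$ with $v_j \neq v_k$. Suppose for contradiction that no edge of $\E_i$ contains both $v_j$ and $v_k$. Then every edge $e \in \E_i$ must omit at least one of $v_j, v_k$. Combined with the observation that each edge of $\E_i$ omits exactly one vertex of $\U_i \setminus \{v_i\}$, this forces each $e \in \E_i$ to be one of the two specific sets $\U_i \setminus \{v_j\}$ or $\U_i \setminus \{v_k\}$. Since edges of $\cH$ are distinct as sets, $|\E_i| \leq 2$, contradicting $|\E_i| \geq 3$.

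This argument is essentially a pigeonhole step, so there is no substantive obstacle; the whole observation reduces to noticing the bijection between $\E_i$ and vertices it misses in $\U_i \setminus \{v_i\}$.
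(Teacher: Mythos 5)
Your proof is correct and follows essentially the same route as the paper's one-line justification: each $e\in\E_i$ is a subset of $\U_i$ of size $r$, hence misses exactly one vertex of $\U_i$, and three distinct edges cannot all miss one of the two vertices $v_j,v_k$. Your extra care with the cases $v_j=v_i$ or $v_k=v_i$ and with the injectivity of $e\mapsto \U_i\setminus e$ is harmless but not a different argument.
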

        Indeed, since $\cH$ is $r$-uniform, each $e\in\E_i$ misses exactly one vertex in $\U_i$. Hence, when $|\E_i|\geq 3$, every pair $\{v_j,v_k\}\in \U_i$ is contained in some edge of $\E_i$.
        
        Suppose $\cH$ does not contain a $(k+1)$-Berge cycle for some $k+1 \in \{3,\dots,n-1\}$. Set shifting constant $s = n-k$ so that $n-s+1=k+1$. Then $S_s(\U_0)\cap e_0=\emptyset$ by Lemma \ref{Sshift} part $(b)$. Hence $|S_s(\U_0)\cup e_0| = |S_s(\U_0)|+|e_0|$. 
        
        Recall that $|S_s(\U_0)|\geq |\U_0|-1$ by Lemma \ref{Sshift} part $(a)$. If $|\U_0|\geq r+3$ or $|\U_0|\geq r+2$ and $v_{n-1}\not \in \U_0$, then we have 
        \[
            n=|V(\cH)|\geq |S_s(\U_0)|+|e_0|  \geq r+2 + r =n+1,
        \]
        a contradiction. Therefore, we may assume $|\U_0|\leq r+1$ or $|\U_0|= r+2$ and $v_{n-1}\in \U_0$. Since $|\E_0| >1$, we have $|\U_0|\geq r+1$. Suppose that $|\U_0|=r+1$. By the pigeonhole principle, $\U_0\cap \U_0^{+k}\neq \emptyset$ (since $r+1>n/2$), so $\U_0$ contains some pair $\{v_i,v_{i+k}\}$. By Observation \ref{kchord},  there exists $f\in\E_0$ with $v_i,v_{i+k}\in f$,  i.e., $f$ contains a $k$-chord. Therefore, $\cH$ contains a $(k+1)$-Berge cycle, a contradiction. 
        
        Now suppose $|\U_0|= r+2$ and $v_{n-1}\in \U_0$. Note that we could reverse the order of the vertices of $C$ to make $v_1$ the $(n-1)$th vertex, so by an analogous argument, we may assume that $v_1\in \U_0$. 
        
        Let $e\in \E_0$ such that $\{v_0,v_1\}\subset e$. Set $\E_0' = \bigcup_{f\in \E_0 - e}f$ and set $\U_0'=\bigcup_{f\in \E_0'}f$. Note that $|\E_0'|=|\E_0|-1 \geq c_r-1\geq 3$. If $|\U_0'| = r+1$, then $\cH$ contains a $(k+1)$-Berge cycle by applying Observation \ref{kchord} and Observation \ref{chordsgivecycles}, a contradiction. Thus we may assume that $|\U_0'|= r+2$. 
        
        Suppose that $u\in S_s(\U_0')\cap (e_0\cup e)$. Then $u\in e$ since $S_s(\U_0)\cap e_0 =\emptyset$. We may swap $e$ and $e_0$ to obtain another hamiltonian Berge cycle $C'=v_0ev_1 \dots v_{n-1}e_{n-1}v_0$ since $e$ contains $v_0$ and $v_1$. Therefore, $\cH$ contains a $(k+1)$-Berge cycle by Lemma \ref{Sshift} part $(b)$ since $S_s(\U_0')\cap e\neq \emptyset$, a contradiction. Thus $S_s(\U_0')\cap(e_0\cup e)=\emptyset$, so it follows that 
        \begin{align*}
            n\geq |V(\cH)|  &\geq |S_s(\U'_0)\cup (e_0\cup e)| = |S_s(\mathcal{U}_0')| + |e_0\cup e| \geq (r+1) + (r+1)=n+1,
        \end{align*}
        a contradiction. 
    
    \medskip
    {\bf Case 4}: $r = \lfloor (n-1)/2 \rfloor$ and $n = 2r+2$. As in Case 3, we assume $v_0$ is contained in $|\E_0| \geq c_r=6$ extra edges. Suppose for some $k+1\in \{3,\dots n-1\}$ that $\cH$ is missing a $(k+1)$-Berge cycle. 
    
    The following is a brief outline of Case 4. 
        We will show that $\U_0$ cannot be too large since $S_s(\U_0)$ would forbid too many vertices from $e_0$. Moreover, if $\U_0$ is large, then $\U_0$ will contain several pairs of the form $\{v_i,v_{i+k}\}$. Using a combination of these two ideas, we are able to remove a small number of edges from $\E_0$ in order to decrease the size of $\U_0$. Eventually, we remove enough edges from $\E_0$ such that the remaining edges cover exactly $r+1$ vertices. Formally, we construct a subset $\E_0'\subseteq \E_0$ with $\U_0'=\cup_{e\in \E_0'}e$ where $|\E_0'|\geq 3$ and $|\U_0'|=r+1$. 
        Since $r+1=n/2$, we then apply our SSC Lemma to find Berge cycles in $\cH$ using the rigid structure of the extra edges. 
        
        \begin{claim}\label{tor+2}
        	There exists a subset $\E_0'\subseteq \E_0$ such that
            \[
                |\E_0' | \geq \left\lceil \frac{|\E_0|-1}{2} \right\rceil\text{ and }|\bigcup_{e\in \E_0'}e|=r+1.
            \]
        \end{claim}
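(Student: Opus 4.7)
The plan is to bound $|\U_0| \leq r+3$ via Lemma~\ref{Sshift} and then handle $|\U_0| \in \{r+1, r+2, r+3\}$. Applying Lemma~\ref{Sshift}(b) with $s = n-k$, the absence of a $(k+1)$-Berge cycle gives $S_s(\U_0) \cap e_0 = \emptyset$; combining with $|S_s(\U_0)| \geq |\U_0|-1$ and $|e_0|=r$ forces $|\U_0| \leq r+3$. Moreover $v_k, v_{n-k} \notin \U_0$, since otherwise some $e \in \E_0$ would contain $\{v_0, v_k\}$ or $\{v_0, v_{n-k}\}$, a $k$-chord yielding a $(k+1)$-Berge cycle by Observation~\ref{chordsgivecycles}. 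When $|\U_0| = r+1$, take $\E_0' = \E_0$ and the claim is immediate.

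For $|\U_0| = r+2$, set $U' := \U_0 - \{v_0\}$ and $M_e := U' - e$ (of size $2$) for each $e \in \E_0$. By pigeonhole the number of $k$-chord pairs inside $\U_0$ is at least $2|\U_0|-n = 2$, and by the above remark these pairs lie in $U'$. Since no edge of $\E_0$ contains a $k$-chord, each $M_e$ is a size-$2$ vertex cover of the graph $G_k$ on $U'$ whose edges are these $k$-chord pairs. If $G_k$ has a matching of size $2$, then $G_k$ admits at most $4$ size-$2$ vertex covers, contradicting $|\E_0| \geq 6$; so $G_k$ is a star, say with center $u^*$. A size-$2$ vertex cover of a star is either of the form $\{u^*, \cdot\}$, or equals the set of two leaves (possible only when the star has exactly two leaves); in either case at most one $M_e$ avoids $u^*$. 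Hence $\E_0' := \{e \in \E_0 : u^* \notin e\}$ satisfies $|\E_0'| \geq |\E_0|-1 \geq \lceil (|\E_0|-1)/2 \rceil$, and since $|\E_0'| \geq 2$ its union equals $\U_0 - \{u^*\}$, of size $r+1$.

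The main obstacle is $|\U_0| = r+3$, where $|M_e| = 3$ and the set of $k$-chord pairs in $\U_0$ has size at least $4$. I would first apply Lemma~\ref{Sshift}(a) both to $C$ and to its reversal to force $v_1, v_{n-1} \in \U_0$. The plan is to find a pair $\{u_1^*, u_2^*\} \subseteq U'$ contained in at least $\lceil (|\E_0|-1)/2 \rceil$ of the missing sets $M_e$, so that $\E_0' := \{e \in \E_0 : u_1^*, u_2^* \notin e\}$ gives the required subset with union $\U_0 - \{u_1^*, u_2^*\}$ of size $r+1$. To find such a pair I would analyze $G_k$ by its matching number, which is at most $3$ since $G_k$ admits size-$3$ vertex covers: matching number $1$ reduces to the previous case, while matching number $2$ or $3$ is handled via an averaging argument over the minimum size-$2$ vertex covers of $G_k$ contained in each $M_e$. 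The main difficulty is controlling degenerate configurations of $G_k$ (e.g.\ a triangle, a $4$-cycle, or three pairwise disjoint $k$-chord pairs), where size-$3$ vertex covers are numerous; securing the bound $|\E_0'| \geq \lceil (|\E_0|-1)/2 \rceil$ in each of these configurations requires combining the hypothesis $|\E_0| \geq 6$ with the structural constraints $v_1, v_{n-1} \in \U_0$ and $v_k, v_{n-k} \notin \U_0$.
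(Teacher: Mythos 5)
Your reduction to $|\U_0|\leq r+3$, the case $|\U_0|=r+1$, and the general idea of viewing the sets $M_e=\U_0-e$ as small vertex covers of the chord graph $G_k$ are all sound, but there are two genuine gaps. First, even in the case $|\U_0|=r+2$ your star analysis silently assumes $G_k$ has at least two edges. When $k=n/2$ the set $\U_0\cap\U_0^{+k}$ is closed under the antipodal map $a\mapsto a+k$, so the pigeonhole bound $|\U_0\cap\U_0^{+k}|\geq 2$ may yield only \emph{one} unordered $k$-chord pair; a single-edge ``star'' with center $u^*$ and leaf $b$ has many size-$2$ vertex covers avoiding $u^*$ (any $\{b,x\}$), so ``at most one $M_e$ avoids $u^*$'' fails. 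Here one must instead keep whichever side (edges missing $u^*$ versus edges missing $b$) is the majority --- this is precisely where the factor $\lceil(|\E_0|-1)/2\rceil$ in the claim comes from, and the fact that your argument only ever discards one edge should have been a warning sign. (A smaller oversight: matching number $1$ allows a triangle as well as a star, though the triangle is killed by having only three size-$2$ covers.)

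Second, and more seriously, the case $|\U_0|=r+3$ is not proved: you give a plan whose central step --- finding a pair $\{u_1^*,u_2^*\}$ lying in at least $\lceil(|\E_0|-1)/2\rceil$ of the $3$-element sets $M_e$ --- can fail. If $G_k$ contains a matching of size $3$, every $3$-element vertex cover picks one endpoint per matching edge, giving $8$ candidate covers and $12$ candidate pairs; with $|\E_0|=6$ there are only $18$ pair-incidences, so an even distribution puts no pair in more than $2$ of the $M_e$'s, short of the required $3$. The idea you are missing is a one-edge reduction rather than a direct attack: since $v_1\in\U_0$ (which you correctly derived), pick one $e\in\E_0$ with $v_1\in e$ and set $\E_0^*=\E_0-\{e\}$. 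If $\bigcup_{f\in\E_0^*}f$ still had $r+3$ vertices, swapping $e$ for $e_0$ gives a second hamiltonian cycle $C'=v_0ev_1\ldots v_{n-1}e_{n-1}v_0$; applying Lemma~\ref{Sshift}(b) to both $C$ and $C'$ yields $S_s(\U_0^*)\cap(e_0\cup e)=\emptyset$, whence $n\geq|S_s(\U_0^*)|+|e_0\cup e|\geq(r+2)+(r+1)=n+1$, a contradiction. Thus deleting a single edge drops the union to size at most $r+2$, and the entire $r+3$ case reduces to the $r+2$ analysis with $|\E_0^*|\geq|\E_0|-1\geq 5$ edges remaining, which comfortably absorbs the further loss of up to half the edges.
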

        \begin{proof}
            First we find $\E_0^*\subseteq \E_0$ such that $|\bigcup_{e\in \E_0^*}e| \leq r+2$. Set shift constant $s=n-k$ so that $n-s+1=k+1$. Since we assumed $\cH$ does not contain a $(k+1)$-Berge cycle, $S_s(\U_0)\cap e_0 = \emptyset$ by Lemma \ref{Sshift} part $(b)$. Suppose that $|\U_0|\geq r+4$ or $|\U_0|\geq r+3$ and $v_{n-1}\not \in \U_0$. Then by Lemma \ref{Sshift} part $(a)$, we have 
            \begin{align*}
                n=|V(\cH)|  &\geq |S_s(\U_0)\cup e_0| =|S_s(\U_0)|+ |e_0| \geq r+3 +r = n+1,
            \end{align*}
            a contradiction. Hence, $ |\U_0|\leq r+2$ or $ |\U_0|= r+3$ and $v_{n-1}\in \U_0$. 
            If $|\U_0|\leq r+2$, then set $\E_0^*=\E_0$ with $\U_0^*=\bigcup_{e\in\E_0^*}e$. If $|\U_0|= r+3$, then by applying a symmetric argument, we may assume $v_1\in \U_0$. Fix an extra edge $e\in \E_0$ with $v_1\in e$ and set $\E_0^* = \E_0-\{e\}$ and $\U_0^* = \bigcup_{f\in \E_0^*} f$. Suppose that $|\U_0^*|=r+3$. As in the previous case, we have that $S_s(\U_0^*)\cap (e_0\cup e)=\emptyset$, and therefore
            \begin{align*}
                n = |V(\cH)|&\geq |S_s(\U_0^*)\cup (e\cup e_0)|=|S_s(\U_0^*)|+|e\cup e_0|\geq (r+3-1)+(r+1)=n+1,
            \end{align*}
            a contradiction; hence, $|\U_0^*|\leq r+2$. Note that $|\E_0^*|\geq |\E_0|-1\geq 5$.

            We now aim to find $\E_0'\subseteq \E_0^*$ such that $|\bigcup_{e\in \E_0'}e|=r+1$. If $|\U^*_0|<r+2$, then we can take $\E_0'=\E_0^*$, so we assume $|\U^*_0|=r+2$. Observe that 
        \begin{align*}
            |\U_0^* \cap (\U_0^*)^{+k}| &= |\U^*_0|+ |(\U_0^*)^{+k}| - |\U_0^* \cup (\U_0^*)^{+k}|\\
            	&\geq 2|\U_0^*| -n \\
            	&= 2(r+2) - (2r+2)=2.
        \end{align*}
        Therefore, $\U^*_0$ contains two pairs of vertices of the form $\{v_i, v_{i+k}\}$ and $\{v_j, v_{j+k}\}$. Note that these four vertices need not be distinct. Let $T = \{v_i, v_{i+k},v_j, v_{j+k}\}$. Then $|T|\in \{2,3,4\}$. 
        	
        \textit{Case 1: $|T|=4$.} In this case, $|\U^*_0- T|=r-2$, so every edge in $\E^*_0$ contains at least 2 vertices from $T$. If $|f\cap T|\geq 3$ for some $f\in\E_0^*$, then $f$ must contain either $\{v_i,v_{i+k}\}$ or $\{v_j,v_{j+k}\}$ as a $k$-chord, which creates a $(k+1)$-Berge cycle by Observation \ref{chordsgivecycles}, a contradiction. Hence, each edge contains exactly two vertices of $T$ and all $r-2$ vertices of $\U_0^*- T$. Observe that $\E_0^*$ could have at most 4 edges which avoid both $\{v_i,v_{i+k}\}$ and $\{v_j, v_{j+k}\}$. Since $|\E_0^*|>4$, some edge in $f \in \E_0^*$ contains a $k$-chord. Therefore, $\cH$ contains a $(k+1)$-Berge cycle, a contradiction.
        
        \textit{Case 2: $|T|=3$.} Without loss of generality, let $T= \{v_i, v_{i+k}, v_{i+2k}\}$ with $i\not\equiv i+2k\pmod n$. Then $|\U_0^*- T|=r-1$, so every edge in $\E_0^*$ contains at least 1 vertex from $T$. Assume no edge from $\E_0^*$ contains either $\{v_{i}, v_{i+k}\}$ or $\{v_{i+k},v_{i+2k}\}$, otherwise we obtain a $k$-chord. If any edge of $\E_0^*$ intersects $T$ in 2 or more vertices, then it does not contain $v_{i+k}$. 
        So exactly one edge in $\E_0^*$, say $f$, contains $v_{i+k}$. Set $\E_0' = \E^*_0 - \{f\}$. This set satisfies $|\E_0'| = |\E^*_0| -1\geq |\E_0| -2 \geq \lceil \frac{|\E_0|-1}{2} \rceil$ (since $|\E_0| \geq 3$) and $|\bigcup_{e\in \E_0'}e| = |\U_0^*| - |\{v_{i+k}\}| = r+1$, as desired. 
    

        \textit{Case 3: $|T|=2$.} Let $T=\{v_i,v_{i+k}\}$ where $2k=n$. Here we have $|\U_0^*- T|=r$ and we may assume no edge in $\E_0^*$ contains both $v_i$ and $v_{i+k}$ since this would create a $(k+1)$-Berge cycle. There is at most one edge in $\E_0^*$ which contains neither $v_{i}$ nor $v_{i+k}$, say edge $g$. The remaining edges in $\E_0^*-\{g\}$ contain exactly one of $v_i$ or $v_{i+k}$. 
        Without loss of generality, at most half of edges in $\E_0^*-\{g\}$ contain $v_i$ and we remove them to construct $\E_0'$. If the edge $g$ exists, then $|\E_0'|\geq \lceil \frac{|\E_0^*| - 1}{2}\rceil + 1 \geq \lceil \frac{|\E_0|-2}{2} \rceil +1\geq \lceil \frac{|\E_0|}{2} \rceil$; otherwise, $|\E_0'| \geq \lceil \frac{|\E^*_0|}{2} \rceil \geq \lceil \frac{|\E_0|-1}{2} \rceil$, and $|\bigcup_{e\in\E_0'}e|=r+1$.
    \end{proof}
        
Next we prove the following claim which is an extension of Observation \ref{kchord}.           
        
    \begin{claim}\label{matchedge}
        Fix $j$ distinct pairs $\{\{a_i,b_i\}\}_{i \in [j]}$ of vertices in $\U_0'$ such that each vertex is in at most $|\E_0'|-1$ pairs. If $j \leq |\E_0'|$, then there exists $j$ distinct edges $\{e(a_i,b_i)\}_{i \in [j]} \subseteq \E_0'$ such that $\{a_i, b_i\} \subset e(a_i,b_i)$ for all $i\in[j]$. 
    \end{claim}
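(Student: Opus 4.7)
The plan is to reformulate the problem as a system of distinct representatives and apply Hall's theorem. The key structural observation will be that $\E_0'$ is tightly constrained: each $e \in \E_0'$ is an $r$-element subset of the $(r+1)$-element set $\U_0'$, so $e$ omits exactly one vertex of $\U_0'$, and distinct edges must omit distinct vertices (being distinct $r$-subsets of an $(r+1)$-set). This yields an injection $m : \E_0' \to \U_0'$ defined by $\{m(e)\} = \U_0' \setminus e$, and the condition ``$e$ contains $\{a,b\}$'' becomes equivalent to ``$m(e) \notin \{a,b\}$.''

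I would then form the auxiliary bipartite graph on parts $[j]$ and $\E_0'$, with $i$ joined to $e$ iff $e$ contains $\{a_i,b_i\}$; a matching saturating $[j]$ is precisely the desired assignment. To verify Hall's condition, for each $i$ let $B_i := \{e \in \E_0' : m(e) \in \{a_i,b_i\}\}$ denote the set of edges unusable for pair $i$. Hall's condition then becomes $|\bigcap_{i \in S} B_i| \leq |\E_0'|-|S|$ for every $S \subseteq [j]$, and injectivity of $m$ gives the crucial bound $|\bigcap_{i \in S} B_i| \leq |\bigcap_{i\in S}\{a_i,b_i\}|$, which makes the verification essentially combinatorial on the pairs themselves.

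The main step will be a short case analysis. When $|S|=1$, I would use $|B_i| \leq 2$ together with $|\E_0'| \geq 3$ (available from Claim~\ref{tor+2} and the Case 4 hypothesis $|\E_0| \geq c_r = 6$) to conclude $|B_i| \leq |\E_0'|-1$. When $|S| \geq 2$, distinctness of the pairs forces $|\bigcap_{i\in S}\{a_i,b_i\}|\leq 1$; if this intersection is empty, Hall's condition reduces to $0 \leq |\E_0'|-|S|$, which follows from $|S|\leq j\leq |\E_0'|$, and if the intersection is a single vertex $v$, then every pair in $S$ contains $v$, so by the hypothesis ``each vertex is in at most $|\E_0'|-1$ pairs'' we have $|S|\leq |\E_0'|-1$, yielding $1\leq |\E_0'|-|S|$. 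Hall's theorem then produces the required matching.

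There is essentially no obstacle. The only non-obvious ingredient is noticing that the $r$-uniformity combined with $|\U_0'|=r+1$ forces $m$ to be injective; once that is in hand, the hypothesis bounding how many pairs contain any given vertex dovetails exactly with the hardest case of Hall's condition, namely many pairs sharing a common vertex.
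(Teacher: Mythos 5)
Your proposal is correct and follows essentially the same route as the paper: both exploit that each edge of $\E_0'$ is determined by the single vertex of $\U_0'$ it omits, set up the identical bipartite graph between pairs and edges, and apply Hall's theorem using exactly the two hypotheses (distinctness of the pairs, and the bound on how many pairs contain a fixed vertex). The only cosmetic difference is that you verify Hall's condition directly by a case analysis on $\bigl|\bigcap_{i\in S}\{a_i,b_i\}\bigr|$, whereas the paper argues by contradiction from a violating set; you also correctly flag the implicit need for $|\E_0'|\geq 3$.
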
 
    \begin{proof}
        Since $|\U_0'| = r+1$, each edge in $\E_0'$ is determined uniquely by the element in $\U_0'$ that it omits. For $u\in \U_0'$ define $e_u= \U_0' - \{u\}$. Construct a bipartite graph $G = X \sqcup Y$, where $X = \{\{a_i,b_i\}\}_{i \in [j]}$, $Y = \E_0'$ and vertex $\{a_i,b_i\} \in X$ and vertex $e_u \in Y$ are adjacent if and only if $\{a_i,b_i\} \subset e_u$, i.e., $u \notin \{a_i,b_i\}$. Therefore, every vertex in $X$ has degree at least $|\E_0'|-2$. If $G$ has a matching saturating $X$, then we are done. Otherwise there exists $X' \subseteq X$ with $|X'| > |N_G(X')|$ by Hall's Theorem. In particular, we must have $|X'| \geq |\E'_0|-1 \geq 2$, and for any non-neighbor $e_u$ of $X'$ and every $\{a_i,b_i\} \in X'$, $\{a_i,b_i\} \not\subset e_u$. That is, each pair of $X'$ intersects the vertex $u$. 
        
        If $Y - N_G(X')$ contains only a single element, $e_u$, then $|X'|>|N_G(X')|$ implies $X\geq |\E_0'|$. Then there are $|\E_0'|$ pairs containing the vertex $u$, a contradiction. If $e_u, e_{u'} \in Y- N_G(X')$, then every pair of $X'$ contains both $u$ and $u'$. Since $|X'|\geq 2$, the pairs of $X$ are not distinct, a contradiction. 
    \end{proof}
    Throughout our proof, we will use $\{e(a_i,b_i)\}_i$ to refer to such a set of distinct edges in $\E_0'$. We are now ready to prove Case 4, which will finish the proof Theorem \ref{mainbig}. 

    Let $\E_0'$ and $\U_0'$ be the sets produced by Claim \ref{tor+2}. Recall $\E'_0$ are extra edges of $\cH$ containing $v_0$. Note that $|\E_0'|\geq 3$ since $|\E_0|\geq 6$ and $|\U_0'|=r+1= n/2$. By assumption, $\cH$ does not contain a $(k+1)$-Berge cycle. Hence $\cH$ does not contain any $k$-chords. Then $\{i: v_i \in\U_0'\}$ is a $k$-SSC set. We leverage this structure to build Berge cycles similar to those built in Case 2 when $r=n/2$. 
    
    Let $d = \gcd(n,k)$. Since $v_0 \in \U_0'$, we have $v_{k} \notin \U_0'$. Let $m, p > 0$ be the least positive integers such that $v_{k+p} \in \U_0'$ and $v_{k-m} \in \U_0'$. By Proposition~\ref{SSC} (ii), $v_{k+d}, v_{k-d} \in \U_0'$, so $1\leq p, m \leq d$. Moreover, since $v_{k-(m-1)} \notin \U_0'$, we have $v_{-(m-1)} \in \U_0'$.
    
    Suppose first that $|\{k-m, k+p, -(m-1)\}| = 3$. The Berge cycle
    \[v_{-(m-1)} e_{-(m-1)} v_{-(m-2)} \ldots v_{k-m} e(v_{k-m}, v_{k+p}) v_{k+p} e(v_{k+p}, v_{-(m-1)}) v_{-(m-1)}\] has length $k + 1$ since the segment from $v_{-(m-1)}$ to $v_{k-m}$ has length $k-1$, and we use 2 additional edges to connect $v_{k+p}$. The distinct edges $e(v_{k-m}, v_{k+p}), e(v_{k+p}, v_{-(m-1)})$ exist by Claim~\ref{matchedge}.
    
    Now suppose $|\{k-m, k+p, -(m-1)\}| = 2$. Observe that $d=\gcd(n,k) \leq \min\{k, n-k\} \leq n/2$. Since $m, p \leq d$, if $k-m = k+p$, then $d = k=n/2$ and $k-m = k+p = 0$. This implies $|\U_0'| = 1$, a contradiction. Moreover we cannot have $k-m = -(m-1)$, otherwise $k=1$. So we may assume $k+p = -(m-1)$.
    
    Partition $\{0, \ldots, n-1\}$ into $n/d$ intervals of length $d$: $\{0, \ldots, d-1\}, \{d, \ldots, 2d-1\}, \ldots, \{n-d, \ldots, n-1\}$. 
    The element $k+p = -(m-1) =n-(m-1)$ belongs to the last interval of length $d$. This interval must be equal to $\{k, k+1, \ldots, k+d-1\}$ since it contains $k+p$. Therefore, $n = k+d$. 
    
    By the choice of $p$ and $m$, the set $\{v_{k-(m-1)}, v_{k-(m-2)}, \ldots, v_k, \ldots, v_{k+(p-1)}\}$ contains $m-1 + p$ consecutive elements that do not belong to $\U_0'$. But $k + p = n-(m-1)$, which implies $m-1 + p = n-k = d$. 
    Since $\{i: v_i\in \U_0\}$ is $k$-SSC, there exists some interval of $d$ consecutive vertices contained in $\U_0'$ which we will denote by $\{v_i, v_{i+1}, \ldots, v_{i+(d-1)}\} \subseteq \U_0'$.
    
    First suppose $k \neq n/2$. There exists two disjoint intervals $\{v_i, v_{i+1}, \ldots, v_{i+(d-1)}\}\subseteq \U_0'$ and $\{v_{i+2d}, v_{i+2d+1}, \ldots, v_{i+2d + (d-1)}\} \subseteq \U_0'$. The Berge cycle
    \[
        v_i e(v_i, v_{i+d-1}) v_{i+d-1} e_{i+d-1} \ldots v_{i + 2d} e(v_{i+2d}, v_{i+2d+2}) v_{i+2d+2} \ldots v_i
    \]
    contains $n - (d-2) - 1 = n-d+1 = k+1$ vertices, a contradiction. 
    
    Finally suppose $k=d= n/2$. Without loss of generality, assume $\{v_0, \ldots, v_{k-1}\} = \U_0'$. The edge set $\{e_0, \ldots, e_{k-2}\} \cup \E_0'$ contains at least $k+2 = n/2+2= \binom{|\U_0'|}{r}+2$ edges. Thus at least two edges in $\{e_0, \ldots, e_{k-2}\}$ are not contained in $\U_0'$. Let $e_j$ with $0< j \leq k-2$ be such an edge, with say $v_{k+s} \in e_j$, where $0\leq s \leq k-1$. Suppose $s \leq k/2$ (the case where $s > k/2$ is symmetric because we can rotate/reorient $C$ as $v_{k-1}e_{k-2}v_{k-2}\ldots v_0 e_{n-1} \ldots v_{k-1}$). Let $t$ be an integer such that \[0\leq t < j< t+ s + 2 \leq k-1.\] Since $0\leq s \leq k/2$, such a $t$ exists. The Berge cycle
    \[C'=v_0 \ldots v_t e(v_t, v_{t+s+2}) v_{t+s+2} e_{t+s+2} \ldots v_{k+s} e_j v_j e(v_j,v_0) v_0\] 
    contains the $s + 1$ vertices from $k$ to $k+s$ and the $k - (s+1) + 1$ vertices in $(\{v_0, \ldots, v_{k-1}\} -\{v_{t+1}, \ldots, v_{t+s+1}\}) \cup \{v_j\}$. That is, $|C'| = k+1$.
    This completes the proof of Theorem \ref{mainbig}. \qed
        

    
\section{Proof of Theorem~\ref{mainsmall}  \texorpdfstring{($r = \lfloor \frac{n-1}{2}\rfloor -1$)}{}}

Here we have $n\in \{2r+3,2r+4\}$ and every vertex is in at least $5(r-1)+2$ extra edges. Recall for each $v_i \in V(\cH)$, $\E_{i} = \{e \in \E: v_i \in e\}$ and $\U_i = \bigcup_{e\in \E_{i}} e$. Since at most $\binom{r+1}{r} < 5(r-1)+2$ edges can fit inside $r+1$ vertices, we must have $|\U_i|\geq r+2$.

We begin with a brief outline of the proof. We will construct a graph $G$ in which cycles of length $\ell$ yield Berge cycles of the same length in $\cH$. The graph $G$ will contain pairs with large co-degree in $\cH$. We then show that $G$ is pancyclic by applying known results for pancyclic and weakly pancyclic graphs.

Suppose that $\cH$ does not contain a $(k+1)$-Berge cycle for some $k+1 \in \{3, \ldots, n-1\}$. We set the shifting constant $s = n-k$ and apply Lemma \ref{Sshift} to each $v_i$ (we may rotate $C$ to view $v_i$ as $v_0$) to obtain
\[
|e_i|+|\U_i|-1\leq |e_i|+|S_s(\U_i)| \leq n.   
\]

Since $|e_i|=r$, we have $|\U_i|\leq r+5$. Thus,  
\begin{equation}\label{smallcodegree}
r+2 \leq |\U_i|\leq r+5.    
\end{equation}

\begin{claim}\label{largecodegree}
For any $\U_i$ satisfying \eqref{smallcodegree}, there are at least $r+1$ vertices $v_j \in \U_i-\{v_i\}$ such that $d_\E(v_j,v_i)\geq r$.
\end{claim}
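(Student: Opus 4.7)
My plan is to argue by contradiction via a simple double count of $|\E_i|$. Suppose at most $r$ vertices of $\U_i - \{v_i\}$ have co-degree at least $r$ with $v_i$. Writing $c := |\U_i| - r$, so that $c \in \{2,3,4,5\}$ by~\eqref{smallcodegree}, this means at least $|\U_i - \{v_i\}| - r = c-1$ vertices $v_j \in \U_i - \{v_i\}$ satisfy $d_\E(v_j, v_i) \leq r-1$. I would fix a subset $L$ of exactly $c-1$ such vertices; the size $|L| = c-1$ is the key calibration that makes the counts fit tightly.

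Next, I would split $\E_i$ according to whether an edge meets $L$. Edges meeting $L$ are bounded via co-degrees:
\[
|\{e \in \E_i : e \cap L \neq \emptyset\}| \;\leq\; \sum_{v_j \in L} d_\E(v_j, v_i) \;\leq\; (c-1)(r-1).
\]
Edges disjoint from $L$ lie inside $\U_i - L$, which has $|\U_i| - (c-1) = r+1$ vertices; since every such edge contains $v_i$ and has its remaining $r-1$ vertices drawn from $\U_i - L - \{v_i\}$ (a set of size exactly $r$), there are at most $\binom{r}{r-1}=r$ such edges. Combining,
\[
|\E_i| \;\leq\; r + (c-1)(r-1) \;=\; cr - c + 1 \;\leq\; 5r - 4,
\]
which contradicts the hypothesis $|\E_i| \geq 5(r-1)+2 = 5r-3$.

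No step here looks delicate; the whole argument reduces to the tight numerical inequality $5r - 3 > 5r - 4$, and the extra-edge requirement $5(r-1)+2$ from the statement of Theorem~\ref{mainsmall} is exactly calibrated to beat $r + (c-1)(r-1)$ in the worst case $c=5$. The only subtlety worth checking is that the choice $|L|=c-1$ is always available, which is immediate from $c \geq 2$ via~\eqref{smallcodegree}.
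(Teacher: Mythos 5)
Your proof is correct and is essentially the paper's argument: the paper likewise isolates $c-1$ vertices of co-degree at most $r-1$, bounds the edges meeting them by $(c-1)(r-1)$ and the edges avoiding them by $\binom{r}{r-1}=r$, and contradicts $|\E_i|\geq 5(r-1)+2$. The only difference is presentational (you negate the conclusion as ``at most $r$ high-codegree vertices'' rather than ``at least $c-1$ low-codegree vertices,'' which is the same thing since $|\U_i-\{v_i\}|=r+c-1$).
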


\begin{proof}
Let $|\U_i|=r+c$ where $2\leq c \leq 5$. It is sufficient to show that there are at most $c-2$ vertices $v_j \in \U_i-\{v_i\}$ such that $d_\E(v_j,v_i) \leq r-1$. Suppose there are $c-1$ such vertices $v_{j_1},\ldots,v_{j_{c-1}} \in \U_i-\{v_i\}$. Then $\U_i - \{v_i\} - \{v_{j_1}, ..., v_{j_{c-1}}\}$ contains $r$ vertices, and hence $\E_i$ contains at most $\binom{r}{r-1} = r$ edges that do not intersect $\{v_{j_1}, ...., v_{j_{c-1}}\}$. Thus,
\[5(r-1)+2 \leq d_\E(v_i) \leq \sum_{k=1}^{c-1} d_\E(v_{j_k},v_i)+r\leq (c-1)(r-1)+r =c(r-1) + 1,\]
a contradiction.\end{proof}


Suppose $G$ and $\cH$ are a graph and hypergraph, respectively, over vertex set $V(\cH)$. We say $G$ and $\cH$ are {\em cycle compatible} if for any cycle $D=u_1u_2\ldots u_{\ell}u_1$ in $G$ there exists a matching $\phi: E(D) \to E(\cH)$ such that $u_1 \phi(u_1u_2) u_2 \phi(u_1u_2)\ldots u_{\ell} \phi(u_{\ell}u_1) u_1$ is a Berge cycle in $\cH$ of the same length. In particular, if $G$ and $\cH$ are cycle compatible and $G$ is pancyclic, then $\cH$ is also pancyclic.

\begin{claim}\label{cyclecompatible}
Let $G$ be a graph with $V(G) = V(\cH)$ such that $d_{\cH}(x,y) \geq 1$ for every $xy \in E(G)$. Suppose that $E(G)$ has a partition $E_1 \cup E_2$ satisfying
\begin{enumerate}
    \item there exists an injection $\phi^*: E_1 \to E(\cH)$ such that for every $xy \in E_1, \{x,y\} \subseteq \phi^*(xy)$ (we call $E_1$ the fixed edges), and
    \item for every $xy \in E_2, d_{E(\cH) - \phi^*(E_1)}(x,y) \geq r$.    
\end{enumerate}
Then G and $\cH$ are cycle compatible.
\end{claim}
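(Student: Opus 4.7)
The plan is to build, for any cycle $D = u_1 u_2 \cdots u_\ell u_1$ in $G$, the required injection $\phi: E(D) \to E(\cH)$ in two stages. First, for each $xy \in E(D) \cap E_1$ I would set $\phi(xy) := \phi^*(xy)$; these are distinct hyperedges each containing the required pair. It then remains to extend $\phi$ to the edges in $E(D) \cap E_2$ using hyperedges drawn from $E(\cH) - \phi^*(E_1)$, distinct from each other (and hence automatically from the Stage~1 images).

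To carry out the extension I would invoke Hall's theorem on the auxiliary bipartite graph $B$ with parts $X = E(D) \cap E_2$ and $Y = E(\cH) - \phi^*(E_1)$, where $xy \in X$ is joined to $e \in Y$ exactly when $\{x,y\} \subseteq e$; a matching of $B$ saturating $X$ is precisely the needed extension. The key step is verifying $|N_B(S)| \geq |S|$ for every $S \subseteq X$, which I would prove by double counting the edges of $B$ incident to $S$. Hypothesis~(2) gives $d_B(xy) \geq r$ for each $xy \in S$, so the count is at least $r|S|$. Conversely, each $e \in N_B(S)$ contributes the number of pairs of $S$ lying in $e$, which are $D$-edges with both endpoints in $V(e)$; since $D$ has maximum degree $2$, the induced subgraph $D[V(e) \cap V(D)]$ has at most $|V(e) \cap V(D)| \leq r$ edges. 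Hence the count is also at most $r|N_B(S)|$, forcing $|N_B(S)| \geq |S|$.

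The main obstacle is precisely this Hall verification, and specifically the bound $r$ on the number of $D$-edges inside a single hyperedge of $\cH$. This is where we crucially use that $D$ is a cycle (so has maximum degree $2$) rather than an arbitrary subgraph: without this restriction one would only obtain a bound of $\binom{r}{2}$ per hyperedge and Hall's condition would fail for $r \geq 4$. Once Hall's theorem produces the matching saturating $X$, Stage~2 is complete and the combined $\phi$ witnesses the Berge cycle $u_1 \phi(u_1u_2) u_2 \cdots u_\ell \phi(u_\ell u_1) u_1$ of length $\ell$ in $\cH$, establishing cycle compatibility.
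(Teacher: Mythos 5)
Your proposal is correct and follows essentially the same route as the paper: assign the $E_1$-edges of the cycle via $\phi^*$, then match the $E_2$-edges into $E(\cH)-\phi^*(E_1)$ by verifying Hall's condition through the double count $r|S|\leq |E(S,N(S))|\leq r|N(S)|$, where the upper bound uses that an $r$-set of vertices contains at most $r$ edges of the cycle $D$. The only (immaterial) difference is that you exclude all of $\phi^*(E_1)$ from the target side rather than just $\phi^*(E_1\cap E(D))$ as the paper does; both choices keep the images disjoint from the fixed edges and preserve the degree lower bound from hypothesis (2).
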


\begin{proof}
Let $D=u_1u_2\ldots u_{\ell}u_1$ be a cycle of length $\ell$ in $G$. We will establish a matching $\phi: E(D) \to E(\cH)$. Define $E_1(D)=E_1 \cap E(D)$ and $E_2(D)=E_2 \cap E(D)$. First set $\phi|_{E_1(D)}=\phi^*$. Now we establish a matching $\phi|_{E_2(D)}: E_2(D) \to E(\cH) - \phi^*(E_1(D))$. Consider the bipartite graph $F=A \sqcup B$ where $A=E_2(D)$ and $B=E(\cH) -\phi^*(E_1(D))$.
For $a=u_iu_{i+1}\in A, b=e\in B$, let $ab \in E(F)$ if and only if $\{u_i,u_{i+1}\} \subseteq e$.
We observe that $d_F(a)\geq r$ for all $a \in A$ because of item 2. Every $e \in E(\cH)$ contains at most $r$ pairs from $\{u_1u_2,u_2u_3,\ldots,u_{\ell}u_1\}$, so $d_F(b)\leq r$ for all $b \in B$. For any $S \subseteq A$, the number of edges between $S$ and $N_F(S)$ satisfies
\[r|S|\leq |E(S,N_F(S))|\leq r|N_F(S)|.\]
This gives $|S|\leq |N(S)|$. Thus by Hall's Theorem there is a matching $\phi_2$ of size $|A|=|E_2(D)|$ between $A$ and $B$. Set $\phi|_{E_2(D)}=\phi_2$. 
Then $u_1 \phi(u_1u_2) u_2 \phi(u_1u_2)\ldots u_{\ell} \phi(u_{\ell}u_1) u_1$ is a Berge cycle of length $\ell$ and hence G and $\cH$ are cycle compatible. 
\end{proof}

Let $C= v_0e_0v_1...v_{n-1}e_{n-1}v_0$ be the Berge hamiltonian cycle of $\cH$. We construct a graph $G$ over vertex set $V(\cH)$ where $xy \in E(G)$ if $d_{\E}(x,y) \geq r$ or if $xy = v_iv_{i+1}$ for some $i \in\{0, \ldots, n-1\}$.

Let $E_1=\{v_iv_{i+1} \in E(G): i \in \{0,\ldots,n-1\}\}$ and $E_2=\{v_iv_j \in E(G): |j-i|>1\}$. Define a bijection $\phi^*:E_1 \to E(C)$ such that for every $v_iv_{i+1} \in E_1$, $\phi^*(v_iv_{i+1})=e_i$. Since $E(\cH) -\phi^*(E_1)=E(\cH) - E(C)=\E$, for every $v_iv_j \in E_2$, $d_{\E}(v_i,v_j)=d_{E(\cH) -\phi^*(E_1)}(v_i,v_j)\geq r$. Then Claim~\ref{cyclecompatible} implies $G$ and $\cH$ are cycle compatible.


By Claim~\ref{largecodegree}, $\delta(G) \geq r+1 \geq (n+2)/3$. 
If $G$ contains a triangle, then it is weakly pancyclic by Theorem~\ref{Brandt}. Moreover, $G$ is hamiltonian and therefore pancyclic. This implies $\cH$ is pancyclic, and we are done.
So suppose $G$ is triangle-free (and non-pancyclic).

If $d_{\E}(v_i,v_{i+1}) \leq r-1$ for all $i$, then by Claim \ref{largecodegree}, $d_G(v_i)\geq r+1+|\{v_{i-1}, v_{i+1}\}|=r+3 > \frac{n}{2}$. Theorem \ref{Bondy} implies that $G$ (and therefore $\cH$) is pancyclic, a contradiction.

Now assume $d_\E(v_t,v_{t+1}) \geq r$ for some $t$. Define the set \[X_t=\{v_{h-1},v_{h+1}: v_h \in e_t-\{v_t,v_{t+1}\}\}.\] We will show that we can obtain a new graph $G'$ by adding one edge to $G$ such that $G'$ has a triangle and is cycle compatible with $\cH$.

\begin{claim}\label{Et1}
$X_t \cap N_G(v_t) = \emptyset$.
\end{claim}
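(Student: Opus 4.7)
The plan is to prove Claim~\ref{Et1} by contradiction, leveraging the construction of $G'$ foreshadowed just before the claim. Suppose some $v_{h+1} \in X_t \cap N_G(v_t)$ with $v_h \in e_t - \{v_t, v_{t+1}\}$ (the case $v_{h-1} \in X_t \cap N_G(v_t)$ is handled symmetrically). A quick sanity check rules out $v_h = v_{t-1}$ (that would force $v_{h+1} = v_t$, contradicting $v_{h+1} \in N_G(v_t)$ in a loopless graph) and $v_h = v_{t+1}$ (excluded by the definition of $X_t$), so $v_t v_h \notin E(C)$.

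If $v_t v_h$ already lies in $E(G)$, then $v_t v_h$, the hamiltonian-cycle edge $v_h v_{h+1}$, and $v_t v_{h+1}$ together form a triangle in $G$, contradicting the standing assumption that $G$ is triangle-free.

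Otherwise $v_t v_h \notin E(G)$, and I would define $G' = (V(\cH), E(G) \cup \{v_t v_h\})$ with partition $E_1' = (E(C) - \{v_t v_{t+1}\}) \cup \{v_t v_h\}$ and $E_2' = E_2 \cup \{v_t v_{t+1}\}$, setting $\phi^*(v_t v_h) = e_t$ and keeping $\phi^*$ unchanged elsewhere. Since $\{v_t, v_h\} \subseteq e_t$, condition $(1)$ of Claim~\ref{cyclecompatible} still holds on $E_1'$, and $\phi^*$ remains an injection whose image equals $E(C)$. Consequently $E(\cH) - \phi^*(E_1') = \E$ is unchanged, so condition $(2)$ is immediate for pre-existing pairs of $E_2$ and holds for the relocated pair $v_t v_{t+1}$ because $d_\E(v_t, v_{t+1}) \geq r$ by the standing case hypothesis. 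Claim~\ref{cyclecompatible} then certifies that $G'$ is cycle compatible with $\cH$.

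Finally, $G'$ contains the triangle $v_t v_h v_{h+1}$, is hamiltonian via $C$, and has $\delta(G') \geq \delta(G) \geq r + 1 \geq (n+2)/3$. Theorem~\ref{Brandt} then forces $G'$ to be weakly pancyclic with girth $3$ and circumference $n$, hence pancyclic. Cycle compatibility transfers pancyclicity to $\cH$, contradicting the assumption that $\cH$ has no $(k+1)$-Berge cycle. The main delicate point is checking cycle compatibility under the rerouting of $\phi^*$; once that is verified, the rest assembles tools already developed in the paper.
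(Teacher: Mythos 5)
Your proposal is correct and follows essentially the same route as the paper: assume $v_{h+1}\in X_t\cap N_G(v_t)$, use triangle-freeness to see $v_tv_h\notin E(G)$, add $v_tv_h$ to form $G'$ with the triangle $v_tv_hv_{h+1}$, reassign $\phi^*(v_tv_h)=e_t$ while moving $v_tv_{t+1}$ into $E_2'$ (valid since $d_\E(v_t,v_{t+1})\geq r$), and conclude via Theorem~\ref{Brandt} and Claim~\ref{cyclecompatible}. The only blemish is the typo writing $E(C)$ where you mean $E_1$ in the definition of $E_1'$; the intent is clear and the argument matches the paper's.
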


\begin{proof}
Suppose $X_t \cap N_G(v_t) \ne \emptyset$ and without loss of generality there exists an index $h$ such that $v_h \in e_t-\{v_t,v_{t+1}\}$ and $v_{h+1} \in N_G(v_t)$. 
Since we assumed $G$ was triangle-free and $v_tv_{h+1}, v_hv_{h+1} \in E(G)$, $v_tv_h \notin E(G)$. Set $G'=G + v_tv_h$.

Since $G'$ also contains a hamiltonian cycle and $\delta(G')\geq r+1 \geq \frac{n+2}{3}$, Theorem \ref{Brandt} implies $G'$ is pancyclic.
Moreover $G'$ contains the triangle $v_tv_hv_{h+1}$ so $G'$ is pancyclic.
Now we show that $G'$ and $\cH$ are cycle compatible. Let $E'_1=E_1\cup \{v_tv_h\}-\{v_tv_{t+1}\}$ and $E'_2=E_2 \cup \{v_tv_{t+1}\}$. Define a bijection $\phi^*:E'_1 \to E(C)$ such that $\phi^*(v_iv_{i+1})=e_i$ for every $v_iv_{i+1} \in E_1-\{v_tv_{t+1}\}$, and $\phi^*(v_tv_h)=e_t$.  Since $E(\cH)-\phi^*(E'_1)=E(\cH)-E(C)=\E$, and $d_{\E}(v_{t}v_{t+1}) \geq r$, for every $v_iv_j \in E'_2$, $d_{\E}(v_i,v_j)=d_{E(\cH)-\phi^*(E'_1)}(v_i,v_j)\geq r$. Claim~\ref{cyclecompatible} implies $G'$ and $\cH$ are cycle compatible. Then $\cH$ is pancyclic, a contradiction.
\end{proof}

\begin{claim}
$|X_t| \geq 2(r-2)$.    
\end{claim}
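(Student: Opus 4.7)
The plan is to show that $|X_t|$ attains the trivial upper bound $2(r-2)$ by excluding the only collisions that can occur. Write $e_t - \{v_t, v_{t+1}\} = \{v_{h_1}, \dots, v_{h_{r-2}}\}$, so that $X_t = \{v_{h_i-1}, v_{h_i+1} : i \in [r-2]\}$. The values $v_{h_i-1}$ are pairwise distinct in $i$, and separately so are the $v_{h_i+1}$, and for $n \geq 3$ one has $v_{h_i-1} \neq v_{h_i+1}$. Hence $|X_t| < 2(r-2)$ can occur only if $v_{h_i-1} = v_{h_j+1}$ for some $i \neq j$, equivalently $h_i \equiv h_j+2 \pmod n$, i.e., $e_t$ contains a $2$-chord $\{v_{h_j}, v_{h_j+2}\}$ disjoint from $\{v_t, v_{t+1}\}$. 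It therefore suffices to derive a contradiction from the existence of such a $2$-chord.

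To do this I would mirror the swap argument used in the proof of Claim~\ref{Et1}. Assume the $2$-chord exists. Since $v_{h_j}v_{h_j+1}$ and $v_{h_j+1}v_{h_j+2}$ both lie in $E_1 \subseteq E(G)$, the triangle-freeness of $G$ (standing assumption in this case) forces $v_{h_j}v_{h_j+2} \notin E(G)$. Let $G' = G + v_{h_j}v_{h_j+2}$. Then $G'$ still contains the hamiltonian cycle $v_0 v_1 \cdots v_{n-1} v_0$, satisfies $\delta(G') \geq r+1 \geq (n+2)/3$, and is nonbipartite thanks to the triangle $v_{h_j}v_{h_j+1}v_{h_j+2}$. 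Theorem~\ref{Brandt} then gives that $G'$ is weakly pancyclic with girth $3$, and hamiltonicity upgrades this to pancyclicity.

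To conclude, I would verify that $G'$ is cycle compatible with $\cH$ so that Claim~\ref{cyclecompatible} transfers pancyclicity from $G'$ to $\cH$, contradicting the missing $(k+1)$-Berge cycle. Set $E_1' = (E_1 - \{v_t v_{t+1}\}) \cup \{v_{h_j}v_{h_j+2}\}$ and $E_2' = E_2 \cup \{v_t v_{t+1}\}$, and extend $\phi^*$ by declaring $\phi^*(v_{h_j}v_{h_j+2}) = e_t$ while keeping $\phi^*(v_iv_{i+1}) = e_i$ for $i \neq t$. Injectivity persists, the image $\phi^*(E_1')$ is still $E(C)$, and the containment $\{v_{h_j}, v_{h_j+2}\} \subseteq e_t$ is precisely the assumed $2$-chord. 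The only new requirement from Claim~\ref{cyclecompatible} is $d_\E(v_t, v_{t+1}) \geq r$, which is exactly the hypothesis defining this case.

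The main obstacle is the reallocation of $\phi^*$ rather than the pancyclicity step: the cycle edge $e_t$ must be freed from certifying the hamiltonian pair $v_t v_{t+1}$ so that it can instead certify the new chord $v_{h_j}v_{h_j+2}$, and the resulting ``debt'' at $v_t v_{t+1}$ is paid off exactly by the case hypothesis $d_\E(v_t, v_{t+1}) \geq r$. Everything else is a direct combination of Theorem~\ref{Brandt} with the triangle-free hypothesis already in force.
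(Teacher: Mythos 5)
Your proof is correct and follows essentially the same route as the paper: reduce the counting to excluding a $2$-chord $\{v_h,v_{h+2}\}\subseteq e_t-\{v_t,v_{t+1}\}$, then rule it out by adding $v_hv_{h+2}$ to $G$, applying Theorem~\ref{Brandt} to the resulting triangle, and re-routing $\phi^*$ so that $e_t$ certifies the new chord while the case hypothesis $d_\E(v_t,v_{t+1})\geq r$ covers $v_tv_{t+1}$ in $E_2'$. Your explicit accounting of which collisions among $\{v_{h_i\pm1}\}$ are possible is a slightly more detailed version of the paper's final sentence, but the argument is the same.
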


\begin{proof}
Suppose for contradiction there exists some index $h$ such that $v_h,v_{h+2} \in e_t-\{v_t,v_{t+1}\}$. Since $G$ is triangle-free and $v_hv_{h+1}, v_{h+1}v_{h+2} \in E(G)$, we must have $v_hv_{h+2} \notin E(G)$. 
Let $G'=G+v_hv_{h+2}$. Then $G'$ is weakly pancyclic and contains the triangle $v_hv_{h+1}v_{h+2}$, hence  is pancyclic.

Now we show that $G'$ and $\cH$ are cycle compatible. Let $E'_1=E_1\cup \{v_hv_{h+2}\}-\{v_tv_{t+1}\}$ and $E'_2=E_2 \cup \{v_tv_{t+1}\}$. Define a bijection $\phi^*:E'_1 \to E(C)$ such that $\phi^*(v_iv_{i+1})=e_i$ for every $v_iv_{i+1} \in E_1-\{v_tv_{t+1}\}$, and $\phi^*(v_hv_{h+2})=e_t$.  
Note $E(\cH)-\phi^*(E'_1)=E(\cH)-E(C)=\E$ and $d_{\E}(v_t,v_{t+1}) \geq r$. Hence $d_{\E}(v_i,v_j)=d_{E(\cH)-\phi^*(E'_1)}(v_i,v_j)\geq r$ for every $v_iv_j \in E'_2$.
Then Claim~\ref{cyclecompatible} implies $G'$ and $\cH$ are cycle compatible and thus $\cH$ is pancyclic. Hence, it is impossible that both $v_h$ and $v_{h+2}$ belong to $e_t-\{v_t,v_{t+1}\}$ for any $h \in\{0, \ldots, n-1\}$. In particular, for all $v_h, v_{h'} \in e_t -\{v_t,v_{t+1}\}$, we have $\{v_{h-1}, v_{h+1}\} \cap \{v_{h'-1}, v_{h'+1}\} = \emptyset$. Then $|X_t| \geq 2(r-2)$.
\end{proof}

Claim \ref{Et1} implies
\[|X_t|+|N_G(v_t)|\leq n \leq 2r+4.\]
Since $|X_t| \geq 2(r-2)$ and $|N_G(v_t)|\geq r+1$, 
\[2(r-2)+r+1\leq 2r+4.\]
This is a contradiction for $r\geq 8$, i.e., $n \geq 2r+3 \geq 19$.
\qed
\bigskip

\section{Concluding remarks}
For small $r$, we can construct hamiltonian, non-pancyclic $r$-uniform hypergraphs as follows.

    {\bf Construction 4}. For $k \geq 3$, start with the $k$-cycle (in the graph sense) $v_1, v_2, \ldots, v_k, v_1$ and replace each edge $v_iv_{i+1}$ with an $r$-uniform clique of size $r+1$ containing $v_i, v_{i+1}$ and $r-1$ distinct new vertices. Let $\cH_{k,r}$ be the resulting hypergraph on $n=kr$ vertices. 

\begin{center}
\begin{tikzpicture}[scale=0.7]

\def\n{14}
\def\r{0.6} 
\def\R{\r/(sin(180/\n))} %
\foreach \i in {1,...,\n} {
    \pgfmathsetmacro{\angle}{360/\n * \i}
    \pgfmathsetmacro{\x}{\R * cos(\angle)}
    \pgfmathsetmacro{\y}{\R * sin(\angle)}
    \ifnum\i=7 
        \draw[fill=black!20, draw=black, thick] (\x,\y) circle (\r); 
        \node at (\x,\y) {\tiny{$K_{r+1}^{(r)}$}};
    \else
        \draw[fill=black!20, draw=black, thick] (\x,\y) circle (\r); 
    \fi
}

\end{tikzpicture}
\end{center}
     Observe that each $r+1$-clique contains a $v_i,v_{i+1}$-Berge path of any length from $1$ to $r$. Combining these paths, we obtain Berge cycles of every length from $k=n/r$ to $n$. Moreover, inside the cliques we have Berge cycles of lengths $2$ to $r+1$.
     
     On the other hand, if $k=n/r \geq  r+3$, then $\cH_{k,r}$ contains no Berge cycles with lengths between $r+2$ and $k-1 = n/r - 1$ (in particular, the $2$-shadow of $\cH_{k,r}$ contains no cycles of these lengths).
     This construction fails to be pancyclic when $n \geq r^2 + 3r$. We are not aware of constructions that are hamiltonian but not pancyclic for larger $r$. 

     In Theorem~\ref{mainbig}, we proved that for large $r$, the minimum-degree condition in Theorem~\ref{main} is not necessary to guarantee that a hamiltonian hypergraph is pancyclic. Instead, if some vertex is contained in sufficiently many (6) edges outside of the Berge hamiltonian cycle, then we can make Berge cycles of any length using the chords of these large edges.
     
     This condition of having $6$ extra edges can likely be further loosened. We propose the following questions.

     {\bf Question 1}. Let $n$ and $r$ be integers with $r = \lfloor (n-1)/2 \rfloor$ and $n$ sufficiently large. Suppose $\cH$ is an $n$-vertex $r$-uniform hypergraph with a hamiltonian cycle $C$. If $\cH$ contains at least one edge outside of $C$, is it true that $\cH$ is pancyclic?

     {\bf Question 2}. For which uniformities $r = r(n)$ is it true that when $n$ is sufficiently large, every $r$-uniform, $n$-vertex hamiltonian hypergraph is also pancyclic?

Entringer and Schmeichel~\cite{ES} proved that every $2n$-vertex hamiltonian bipartite graph with more than $n^2/2$ edges is bipancyclic.~\footnote{We are unfortunately unable to find their paper online.} We translate the hypergraph problem into bipartite graphs by considering the bipartite incidence graph of an $n$-vertex, $r$-uniform hamiltonian hypergraph with $n$ edges. If $r > n/2$, then the corresponding bipartite graph is hamiltonian with exactly $rn > n^2/2$ edges. This shows that all $r > n/2$ satisfy Question 2. (Note that this also implies  Theorem~\ref{main} for $r > n/2$, although our proof for this case was also trivial.)  On the other hand, Construction $4$ demonstrates that it fails when $r \mid n$ and $n \geq r^2+3r$, i.e., $r = \sqrt{n}(1-o(1))$. It would be interesting to study what happens between these bounds.
     
{\bf Acknowledgements}. The authors would like to thank Jonah Klein and Dinesh Limbu for their helpful comments.

\end{document}